\documentclass[11pt,a4paper]{amsart}
\usepackage[utf8]{inputenc}

\usepackage{tikz, float}
\usepackage{comment}
\usepackage{caption}
\usepackage{subcaption}
\usepackage{amsmath,amsthm,amsfonts,graphicx,amssymb,amscd,dsfont,euscript,enumerate,verbatim,calc,mathtools,listings}
\usepackage{mathrsfs}
\usepackage[hidelinks]{hyperref}
\usepackage[nameinlink, noabbrev]{cleveref}
\newtheorem{thm}{Theorem}[section]

\newtheorem{cor}[thm]{Corollary}
\newtheorem{lem}[thm]{Lemma}

\newtheorem{prop}[thm]{Proposition}

\theoremstyle{definition}
\newtheorem{defn}[thm]{Definition}

\newcommand{\pslnr}{\mathrm{PSL}_n(\mathbb{R})}
\newcommand{\pslnc}{\mathrm{PSL}_n(\mathbb{C})}
\newcommand{\slnc}{\mathrm{SL}_n(\mathbb{C})}
\newcommand{\slnr}{\mathrm{SL}_n(\mathbb{R})}

\newcommand{\psltr}{\mathrm{PSL}_3(\mathbb{R})}

\newcommand{\pslr}{\mathrm{PSL}_2(\mathbb{R})}

\newcommand{\De}{D_\epsilon}

\newcommand{\Z}{\mathbb Z}

\newtheoremstyle{named}%
{}{}{\itshape}{}{\bfseries}{.}{.5em}{\thmnote{#3}}
\theoremstyle{named}

\allowdisplaybreaks[1]
\numberwithin{equation}{section}
\begin{document}
\title[Length-convexity and domination for Hitchin representations]{Length-convexity and domination\\ for Hitchin representations}
\author{Subhojoy Gupta}

\address{Department of Mathematics, Indian Institute of Science, Bangalore 560012, India}
\email{subhojoy@iisc.ac.in}

\begin{abstract}
Let $S$ be a closed oriented surface of genus $g\geq 2$. For any $n\geq 3$, let $\text{Hit}_n(S)$ be the Hitchin component and let $\text{Hit}_n(S)^\prime$ be the subset of representations that take values in $\mathrm{PSp}_{2m}(\mathbb{R})$ if $n=2m$ and $\mathrm{PSO}(m,m+1)$ if $n=2m+1$. In this note we prove that for any Hitchin representation $\rho \in \text{Hit}_n(S)$ there is $j \in \text{Hit}_n(S)^\prime$ such that $\rho$ dominates $j$ in the Hilbert length spectrum; this confirms a conjecture of Lee-Zhang and, independently, Tholozan who had proved this for $n=3$. The key step is establishing a convexity of the Hilbert length function in the Bonahon-Dreyer parametrization of $\text{Hit}_n(S)$; this uses a result from the recent work of Barman-Gupta. We also prove these convexity and domination results for translation length functions and their spectra.  
\end{abstract}

\medskip

\maketitle

\section{Introduction}
Let $S$ be a closed, oriented surface of genus $g\geq 2$ and let $\Gamma = \pi_1(S)$ be its fundamental group. For $n\geq 2$ let $\chi_n^\mathbb{R}(S) = \text{Hom}(\Gamma, \pslnr)/\pslnr$ be the $\pslnr$-representation variety and let  $\text{Hit}_n(S)$ denote the Hitchin component. 

\noindent In what follows, we define $$H_n = \begin{cases}
     \mathrm{PSp}_{2m}(\mathbb{R}) & \text{if } n=2m \\
    \mathrm{PSO}(m,m+1)  & \text{if } n=2m+1
\end{cases} $$
and let $\text{Hit}_n(S)^\prime$  be the subset of $\text{Hit}_n(S)$ that comprises Hitchin representations that are conjugate to ones taking values in $H_n$. This subset is the fixed-point set of the contragredient involution on the Hitchin component (see \S4). 

\medskip 
\noindent Given a Hitchin representation $\rho\in \text{Hit}_n(S)$, and  nontrivial $\gamma \in \Gamma$ the \textit{Hilbert length} of the curve $\gamma$ is 
\begin{equation}\label{eq:hl}
\ell_\rho^H(\gamma) = \log \frac{\lambda_n}{\lambda_1}
\end{equation} 
where $0<\lambda_1 <\lambda_2 <\cdots <\lambda_n$ are the eigenvalues of the unique lift of $\rho(\gamma)$ in $\slnr$ with positive eigenvalues (see \cite[Theorem 1.5]{Lab06}). The Hilbert length of the trivial element is defined to be zero.

\medskip 

\noindent We shall prove:

\begin{thm}\label{thm:main} Given any $\rho \in \emph{Hit}_n(S)$, there is a $j \in \emph{Hit}_n(S)^\prime$ that subdominates $\rho$, that is, satisfies $\ell_j^H(\gamma) \leq \ell_\rho^H(\gamma) $ for all $\gamma \in \Gamma$.
    
\end{thm}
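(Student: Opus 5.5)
The plan is to use the contragredient involution $\sigma:\text{Hit}_n(S)\to\text{Hit}_n(S)$, $\sigma(\rho)=\rho^*:=(\rho^{T})^{-1}$, whose fixed-point set is $\text{Hit}_n(S)^\prime$. First observe that Hilbert length is $\sigma$-invariant. The eigenvalues of $\rho^*(\gamma)$ are $\lambda_n^{-1}<\cdots<\lambda_1^{-1}$, so $\ell^H_{\rho^*}(\gamma)=\log(\lambda_1^{-1}/\lambda_n^{-1})=\ell^H_\rho(\gamma)$. The natural candidate for $j$ is then a ``midpoint'' of $\rho$ and $\rho^*$, taken in coordinates in which $\sigma$ is affine and $\ell^H(\gamma)$ is convex. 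Given such coordinates, convexity gives
\[
\ell^H_j(\gamma)\le \tfrac12\ell^H_\rho(\gamma)+\tfrac12\ell^H_{\rho^*}(\gamma)=\ell^H_\rho(\gamma),
\]
which is exactly the subdomination claimed in Theorem~\ref{thm:main}.

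Concretely, I would work in the Bonahon--Dreyer parametrization, which embeds $\text{Hit}_n(S)$ as an open convex polyhedral cone $\mathcal{P}$ in $\mathbb{R}^N$. Fix a maximal geodesic lamination consisting of finitely many closed leaves together with spiralling leaves, and use the triangle invariants and shearing invariants relative to it. In these coordinates $\sigma$ acts linearly. It permutes triangle invariants, $X_{abc}\mapsto X_{cba}$ or a similar index reversal coming from the duality between flags, and it sends the shear $\sigma_i\mapsto\sigma_{n-i}$; this should follow from how the invariants are built from flag positions under duality. Because $\mathcal{P}$ is convex and $\sigma$-invariant, the point $j=\tfrac12(\rho+\sigma(\rho))$ lies in $\mathcal{P}$ and is $\sigma$-fixed, so $j\in\text{Hit}_n(S)^\prime$.

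The key step, and the main obstacle, is to prove that $\rho\mapsto\ell^H_\rho(\gamma)$ is convex along straight segments in $\mathcal{P}$ for every $\gamma$. For a closed leaf $\gamma$ of the lamination this is automatic, since $\log(\lambda_{i+1}/\lambda_i)$ is a linear combination of shear coordinates. For general $\gamma$, I would express $\rho(\gamma)$ in a flag-adapted basis as a product of matrices that are positive with respect to a fixed Frenet/positive structure. These would be exponentials of shears times elementary matrices whose entries are monomials in $e^{X}$ and $e^{\sigma}$, following the Fock--Goncharov/Bonahon--Dreyer edge and triangle factorization. The aim is to show that the top eigenvalue $\lambda_n(\gamma)$ satisfies the following: $\log\lambda_n$ is a limit of $\tfrac1k\log$ of a trace, or of a sum of positive monomials in exponentials of the coordinates. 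Such a log-sum-exp of linear functions is convex. The result of Barman--Gupta is presumably what supplies this positive-monomial (totally positive) expansion of $\rho(\gamma)$ in these coordinates.

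Applying the same argument to $\rho^*$ gives convexity of $-\log\lambda_1=\log\lambda_1(\rho^*(\gamma))^{-1}\ldots$, more precisely of the top eigenvalue of the dual. Hence $\ell^H=\log\lambda_n-\log\lambda_1$ is a sum of two convex functions and is therefore convex. The delicate points are the following. First, the coordinates are affine only after fixing the lamination, so I need to check that spiralling does not obstruct the monomial expansion; I would handle this by taking $\gamma$ transverse to the lamination and computing the holonomy through finitely many triangles. Second, I need to confirm that $\sigma$ is genuinely linear in these coordinates.
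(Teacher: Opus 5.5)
There is a genuine gap in how you produce $j$. Your convexity step is essentially the paper's: positive Laurent-monomial expansions of the monodromy, log-sum-exp/H\"older with Gelfand's formula, and linearity along closed leaves via Bonahon--Dreyer. The gap is the claim that $\sigma$ is affine in Bonahon--Dreyer (equivalently, logarithmic Fock--Goncharov) coordinates; it is not, so $\tfrac12(v+\sigma(v))$ need not be $\sigma$-fixed.

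The reason is structural. The shear invariant $D_a(E,F,G,G')$ depends on the flags $E,F$ of the common edge, but only on the \emph{lines} $G^{(1)},G'^{(1)}$ of the opposite vertices. For the dual flags it depends instead on the hyperplanes $G^{(n-1)},G'^{(n-1)}$. Rewriting it in the original invariants brings in the triangle invariants non-linearly.

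Concretely, take $n=3$, with triangle parameters $X,Y$ on the two sides of an edge carrying edge parameters $Z,W$. Fock--Goncharov's formula for the dual is $X'=1/X$, $Y'=1/Y$, $Z'=W(1+Y)/(Y(1+X))$, $W'=Z(1+X)/(X(1+Y))$. Since $\log(1+e^{y})$ is not affine in $y$, $\sigma$ is not affine. The geometric-mean point has triangle parameters $1$ and edge parameters $\sqrt{ZW(1+Y)/(Y(1+X))}$ and $\sqrt{ZW(1+X)/(X(1+Y))}$. These agree only when $(X-Y)(1-XY)=0$, so in general your $j$ is not in $\text{Hit}_n(S)^\prime$. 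This is why the paper's explicit $n=3$ construction for punctured surfaces needs two successive midpoints: the first sets every triangle parameter to $1$, and only on that locus does $\sigma$ act linearly, by swapping $Z$ and $W$.

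The missing idea is a fixed-point argument that needs only continuity of $\sigma$. The paper takes $K_\rho=\{\eta:\ell^H_\eta(\gamma)\le\ell^H_\rho(\gamma)\text{ for all }\gamma\}$, which is:
\begin{itemize}
\item convex, as an intersection of sublevel sets of the convex functions $L_\gamma$;
\item nonempty, since it contains $\rho$;
\item $\sigma$-invariant, because Hilbert length is $\sigma$-invariant (your observation);
\item compact, because Hilbert lengths of a filling collection containing a pants decomposition give a proper map on $\text{Hit}_n(S)$ (Lee--Zhang).
\end{itemize}
Brouwer then gives $j\in K_\rho$ with $\sigma(j)=j$. Your plan has no compactness input, and this is exactly where it is needed.

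Two smaller corrections to your convexity step. First, a transverse arc that crosses a closed leaf of the lamination crosses infinitely many spiralling leaves, so the monodromy is not a finite product over triangles. You need the slithering map as an infinite product, finite approximants $A_N$ whose entries are sums of positive generalized Laurent monomials, and the passage of convexity to the limit. Second, convexity of $-\log\lambda_1$ should come from running the argument for $\gamma^{-1}$ at the same point $v$, since the spectral radius of $\rho_v(\gamma^{-1})$ is $1/\lambda_1$. Passing to $\rho^*$, i.e.\ to $\sigma(v)$, would again presuppose that $\sigma$ is affine.
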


This confirms a conjecture of Lee-Zhang and Tholozan (see \cite[Conjecture 3.6]{LZ} and \cite[pg. 1384]{Tholozan}); for $n=3$ this was proved by Tholozan, with a stronger statement involving a \textit{strict} domination -- see \cite[Theorem B]{Tholozan}. In the case of $n=3$, the subgroup $H_3 =  \mathrm{PSO}(1,2) \cong \pslr$ and hence any element $j \in \text{Hit}_3(S)^\prime$ is in fact a \textit{Fuchsian representation}. However for $n>3$, it was shown in \cite{LZ} (by an argument attributed to Labourie) that the subdomination does \textit{not} hold if one requires $j$ to be $n$-Fuchsian.

\medskip

The proof is based on one part of our recent work \cite{BGup2}, that used and developed an algebraic technique for representing the monodromy $\rho(\gamma)$ as the weight matrix of a weighted planar network. That paper concerned certain \textit{complex} deformations of a Hitchin representation in $\text{Hit}_n(S)$, and proved a domination result there. In the case when $n=2$, this had been proved in work of Deroin-Tholozan in \cite{DT}; their method involved harmonic maps and relied on the fact that the corresponding symmetric space is \textit{strictly} negatively curved, which fails in higher rank. In \S2 we shall recall the linear-algebraic setup in \cite{BGup2}, which in turn, relies on the recent work of Maloni-Martone-Mazzoli-Zhang (\cite{MMMZ}) that generalized the Bonahon-Dreyer parametrization of $\text{Hit}_n(S)$ in  \cite{BD14} and \cite{BD17}.

\medskip
\noindent Recall that the Bonahon-Dreyer parametrization in \cite[Theorem 0.4]{BD17} is a homeomorphism  
\begin{equation}\label{eq:bd}
\Phi: \text{Hit}_n(S) \longrightarrow   \mathcal{C}_n(\lambda) \subset \mathbb{R}^N
\end{equation}
where $\mathcal{C}_n(\lambda)$ is a convex polyhedral cone in the vector space of dimension $N:=(n^2-1)(2g-2)$ comprising generalized shear-bend cocycles transverse to a maximal lamination $\lambda$ on $S$.  The key intermediate result,  that could be of independent interest, is:

\begin{thm}\label{thm:convex}
In the linear cone $\mathcal{C}_n(\lambda)$, for any  $\gamma \in \Gamma$ the Hilbert length functional $$L_\gamma: \mathcal{C}_n(\lambda) \to \mathbb{R}_{\geq 0}$$ defined by $L_\gamma(z) := \ell^H_{\Phi^{-1}(z)}(\gamma)$ is convex, that is, 
\begin{equation}\label{eq:cvx} 
    L_\gamma(tx + (1-t)y) \leq tL_\gamma(x) + (1-t)L_\gamma(y)
\end{equation}
for any $x,y \in \mathcal{C}_n(\lambda)$ and $t\in (0,1)$.
\end{thm}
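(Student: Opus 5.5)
The plan is to express $\rho(\gamma)$ explicitly in terms of the Bonahon–Dreyer coordinates, as in \cite{BGup2}, and then read off convexity from that formula.

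\textbf{Step 1 (monodromy formula).} Following \cite{BGup2} and \cite{MMMZ}, I would write the lift of $\rho(\gamma)$, up to conjugation, as a product
\[
M_\gamma(z) = \prod_{k} E_k\bigl(e^{\sigma_{k}(z)}\bigr)\, D_k\bigl(e^{\tau_k(z)}\bigr).
\]
Here each $\sigma_k,\tau_k$ is a \emph{linear} function of the cocycle $z\in\mathcal{C}_n(\lambda)$. These are the shear and triangle invariants, suitably summed along the path of $\gamma$ through the spiralling leaves of $\lambda$; in the non-finite case this is a convergent infinite product. The factors $E_k$ are unipotent "elementary" matrices with positive entries and the $D_k$ are diagonal. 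The key input from \cite{BGup2} is that this product is the weight matrix of a weighted planar network. Consequently each matrix entry, and more generally each minor, of $M_\gamma(z)$ is a sum of monomials $\sum_\alpha c_\alpha e^{\langle \alpha, z\rangle}$ with $c_\alpha\ge 0$, a possibly infinite but convergent series. The edge weights are exponentials of linear forms in $z$.

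\textbf{Step 2 (log-sum-exp convexity).} A function of the form $z\mapsto \log\sum_\alpha c_\alpha e^{\langle\alpha,z\rangle}$ with $c_\alpha\ge0$ is convex, by Hölder's inequality. Convex functions are also closed under pointwise limits, sums and suprema. So for every power $m$, both $\log \mathrm{tr}\,M_\gamma(z)^m$ and $\log \mathrm{tr}\,\Lambda^{n-1}M_\gamma(z)^m$ are convex in $z$, since $M_{\gamma^m}=M_\gamma^m$ is again a planar-network matrix. Now
\[
\lambda_n = \lim_m (\mathrm{tr}\,M^m)^{1/m},
\]
and by Perron–Frobenius, or simply by distinct positive eigenvalues,
\[
\lambda_1^{-1} = \lambda_2\cdots\lambda_n = \lim_m (\mathrm{tr}\,\Lambda^{n-1}M^m)^{1/m},
\]
using $\det M=1$. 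This gives
\[
L_\gamma(z)=\log\lambda_n-\log\lambda_1 = \lim_{m\to\infty}\tfrac1m\Bigl(\log \mathrm{tr}\,M_\gamma(z)^m + \log\mathrm{tr}\,\Lambda^{n-1}M_\gamma(z)^m\Bigr),
\]
which is a pointwise limit of convex functions and hence convex. (Alternatively, $\log\lambda_n$ is convex because the spectral radius of a nonnegative matrix whose entries are log-convex is log-convex, by Kingman's theorem.)

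\textbf{Main obstacle.} The hard step is Step 1: obtaining a representation of $\rho(\gamma)$, or of a conjugate, in which all entries of $M_\gamma$ and of $\Lambda^{n-1}M_\gamma$ are positive combinations of exponentials of linear forms in the Bonahon–Dreyer coordinates. The difficulty is that the triangle invariants and the shear invariants enter through different elementary matrices, and the closed-leaf (spiralling) contributions give infinite products. Here I would invoke the result of \cite{BGup2} directly, that the monodromy is the weight matrix of a planar network whose weights are monomials in $e^{z_i}$, and then use the Lindström–Gessel–Viennot lemma to get positivity of the minors needed for $\Lambda^{n-1}$. A second point needing care is that the conjugating matrix may itself depend on $z$. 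This is harmless, because traces are conjugation-invariant, so only the trace formulas above are used.
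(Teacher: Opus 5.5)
Your argument for curves transverse to $\lambda$ is essentially the paper's. It uses the same ingredients: positivity of the approximating monodromy matrices $A_N(v)$ (Lemma \ref{lem:form}, Corollary \ref{cor:form}), H\"{o}lder/log-sum-exp convexity (Lemma \ref{lem:convex}), a limit formula for the spectral radius, and closure of convexity under pointwise limits.

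Two differences are worth noting.
\begin{enumerate}
\item To obtain $-\log\lambda_1$ you pass to $\bigwedge^{n-1}$. This needs positivity of all $(n-1)\times(n-1)$ minors, i.e.\ the planar-network structure of $A_N(v)$ together with Lindstr\"{o}m's lemma. That is the paper's Lemma \ref{lem:anwt} and Corollary \ref{cor:anwt}, and your formula is just $L_\gamma = L^{\omega_1}_\gamma + L^{\omega_{n-1}}_\gamma$. The paper needs only entrywise positivity: it runs the same argument for $\gamma^{-1}$ and uses $\varrho(A_{\gamma^{-1}}(v)) = 1/\lambda_1(v)$.
\item Your claim that the entries and minors of the exact monodromy are convergent positive series is not justified as stated, but you do not need it. Work with the finite products $A_N(v)$, for which $\log\mathrm{tr}\,A_N(v)^m$ is convex, and let $N\to\infty$ before $m\to\infty$. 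Using the trace in place of the paper's $\ell^1$-norm Gelfand formula is harmless.
\end{enumerate}

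The genuine gap is the case where the axis of $\gamma$ is a leaf of $\tilde\lambda$, i.e.\ $\gamma$ is a closed leaf of $\lambda$ or a power of one. The monodromy input you invoke from \cite{BGup2} (Lemma \ref{lem:form} here) is stated only for $\gamma$ whose axis is not a leaf of $\tilde\lambda$. It is built from the plaques crossed by a lift of $\gamma$ running between $P$ and $\gamma\cdot P$. Your Step~1 ("summed along the path of $\gamma$ through the spiralling leaves") says nothing about what happens when $\gamma$ is itself a leaf, yet the theorem is asserted for every $\gamma\in\Gamma$.

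The paper closes this case with a different input. By \cite[Theorem 7.5]{BD17}, each $\log(\lambda_{i+1}/\lambda_i)$ equals the intersection number of $\gamma$ with the $\alpha$-component of the shear-bend cocycle, so it is linear in $z$. Hence $L_\gamma=\sum_{i=1}^{n-1}\log(\lambda_{i+1}/\lambda_i)$ is linear, and in particular convex, on $\mathcal{C}_n(\lambda)$. You need to add this, or an independent monodromy argument for closed leaves, to complete the proof.
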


We remark that for shear coordinates on Teichm\"{u}ller space (which can be thought of as the Hitchin component for $n=2)$, it is well-known that length functions  are convex (see \cite{BBFS}); Theorem \ref{thm:convex} is a higher-rank analogue. In the higher-rank case, such a convexity is proved along certain Hamiltonian curves in recent work of Bridgeman-Labourie, see \cite[Theorem B]{BridLab}. 

\medskip

The proof relies on the approximations to the monodromy matrix  that we develop in \cite{BGup2}, in particular a certain positivity the entries have in terms of the parameters. Using the linear-algebraic techniques introduced in that paper and its prequel \cite{BGup}, we can also prove the convexity of \textit{fundamental weight lengths} (see \eqref{eq:fwt}) and consequently Theorems \ref{thm:main} and  \ref{thm:convex} for translation lengths instead of Hilbert lengths. Here, recall that the \textit{translation length} of $\gamma$ is 
\begin{equation}\label{eq:transl}
    \ell^X_\rho(\gamma)= \sqrt{2((\log\lambda_1)^2 + \cdots + (\log\lambda_n)^2)}
\end{equation}
 where, as in \eqref{eq:hl}, $0<\lambda_1 <\cdots <\lambda_n$ are the eigenvalues of a determinant-one representative of $\rho(\gamma)$.

\begin{thm}\label{thm:transl} In the linear cone $\mathcal{C}_n(\lambda)$, for any  $\gamma \in \Gamma$ the translation length functional
    $$L^{X}_\gamma: \mathcal{C}_n(\lambda) \to \mathbb{R}_{\geq 0}$$ defined by $L^{X}_\gamma(z) := \ell^X_{\Phi^{-1}(z)}(\gamma)$ is convex. Moreover, any $\rho \in \text{Hit}_n(S)$ dominates  a representation $j\in \text{Hit}_n(S)^\prime$ in the translation length spectrum. 
\end{thm}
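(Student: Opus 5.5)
Write $\ell_k(z)=\log\lambda_n+\dots+\log\lambda_{n-k+1}$, for $1\le k\le n-1$, for the fundamental weight lengths (the quantities in \eqref{eq:fwt}) of $\rho=\Phi^{-1}(z)$ at $\gamma$. The plan is to show first that each $z\mapsto\ell_k(z)$ is convex on $\mathcal{C}_n(\lambda)$, then deduce convexity of $L^X_\gamma$, and finally obtain domination by the same averaging argument that gives Theorem \ref{thm:main} from Theorem \ref{thm:convex}.

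\textbf{Convexity of $\ell_k$.} The quantity $\ell_k(z)$ is the log of the top eigenvalue of $\Lambda^k\rho(\gamma)$, so
\[
\ell_k(z)=\lim_{m\to\infty}\frac1m\log \operatorname{tr}\Lambda^k\rho(\gamma^m).
\]
Using the planar-network description of the monodromy from \cite{BGup2}, and its refinement through the Lindström–Gessel–Viennot lemma as in \cite{BGup}, the $k\times k$ minors of the approximating matrices for $\rho(\gamma^m)$ are sums of products of path weights. Each such weight has the form $e^{\langle a,z\rangle}$ with positive coefficients. Hence $\log$ of each minor, and of their trace, is a log-sum-exp of linear functionals of $z$, which is convex. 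This is the same positivity mechanism used for Theorem \ref{thm:convex}, where the $k=1$ case applied to $\rho$ and to its contragredient gives $\log\lambda_n-\log\lambda_1$. Limits of convex functions are convex, so each $\ell_k$ is convex.

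\textbf{Passing to $L^X_\gamma$.} Since $\sum_i\log\lambda_i=0$, we can write $\log\lambda_{n-k+1}=\ell_k-\ell_{k-1}$. So $\ell^X$ is the Euclidean norm of a vector in the dominant Weyl chamber, expressed in the fundamental-weight coordinates $(\ell_1,\dots,\ell_{n-1})$. The norm $v\mapsto\|v\|$ is convex. I also need it to be \emph{monotone nondecreasing} in each $\ell_k$ on the positive cone of fundamental-weight coordinates. This holds because the inverse Cartan matrix of type $A_{n-1}$ has positive entries: $\|v\|^2=\sum_{k,l}C^{-1}\ldots$ in the dual basis, where the gradient in $\ell$-coordinates is $2\sum_i \log\lambda_i\,\partial\log\lambda_i/\partial\ell_k$, which reduces to $2(\log\lambda_{n-k+1}-\log\lambda_{n-k})\ge0$. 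A monotone convex function of convex functions is convex, so $L^X_\gamma$ is convex.

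\textbf{Domination.} Let $\sigma$ be the contragredient involution. In Bonahon–Dreyer coordinates it acts by a linear map $\hat\sigma$ preserving $\mathcal{C}_n(\lambda)$; concretely it swaps the triangle and shear invariants and reverses the orientation of $\lambda$. Moreover $\ell^X_{\rho^*}(\gamma)=\ell^X_\rho(\gamma)$, since the eigenvalues are inverted and sorted. Set $z=\Phi(\rho)$ and $w=\tfrac12(z+\hat\sigma z)$. Then $w$ lies in the cone, is $\hat\sigma$-fixed, and so $j=\Phi^{-1}(w)\in\text{Hit}_n(S)'$. Convexity gives $L^X_\gamma(w)\le \tfrac12(L^X_\gamma(z)+L^X_\gamma(\hat\sigma z))=L^X_\gamma(z)$.

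\textbf{Main obstacle.} I expect the hardest step to be justifying the positivity of the minors uniformly in $m$, and controlling the approximation error so that it is subexponential in $m$. The first thing I would try is to work with the exact product of positive elementary matrices, with exponential-of-linear entries, along the lamination. The limit argument would be pushed through Perron–Frobenius applied to the totally positive matrix $\Lambda^k\rho(\gamma)$.
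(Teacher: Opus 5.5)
Your convexity half is essentially the paper's route: convexity of the fundamental weight lengths from Lindstr\"om positivity of the minors of the approximants $A_N$, followed by a majorization step.

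\begin{itemize}
\item Using $\frac1m\log\operatorname{tr}\bigwedge^k$ instead of Gelfand's formula is fine. The ``obstacle'' you flag is not real: fix $m$, let $N\to\infty$, then let $m\to\infty$, since pointwise limits of convex functions are convex. Alternatively, apply Corollary \ref{cor:convex} directly to $\bigwedge^k A_N(v)$.
\item Your gradient formula $\partial\|a\|^2/\partial\ell_k=2(a_{n-k+1}-a_{n-k})$ is a hands-on replacement for Karamata's inequality. It gives monotonicity only on the dominant cone $\{2\ell_k\ge\ell_{k-1}+\ell_{k+1}\}$, not on the positive orthant: for $n=3$ the $\ell_1$-derivative is negative at $(\ell_1,\ell_2)=(1,10)$. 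It still suffices, because $\ell(tx+(1-t)y)$ and $t\ell(x)+(1-t)\ell(y)$ both lie in that convex cone, so you can integrate along the segment between them.
\item You skip the case where the axis of $\gamma$ is a leaf of $\tilde\lambda$. There Lemma \ref{lem:form} is unavailable, and the paper instead uses the linearity coming from \cite[Theorem 7.5]{BD17}.
\end{itemize}

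The genuine gap is the domination step. The contragredient involution is \emph{not} linear in Bonahon--Dreyer (or logarithmic Fock--Goncharov) coordinates. The double ratios of the dual flags involve the hyperplanes rather than the lines of the opposite flags, and rewriting them produces non-monomial functions of the triangle invariants. This is already visible in the $n=3$ formulas \eqref{eq:inv3}--\eqref{eq:inv4}. The midpoint $w$ of $z$ and $\sigma(z)$ has triangle parameters $X_1=Y_1=1$, but its edge parameters satisfy
\[
Z_1^2=\frac{ZW(1+Y)}{Y(1+X)},\qquad W_1^2=\frac{ZW(1+X)}{X(1+Y)},
\]
and these differ unless $(X-Y)(1-XY)=0$. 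So $w$ is in general not $\sigma$-fixed, and $\Phi^{-1}(w)$ need not lie in $\text{Hit}_n(S)'$. This is exactly why \S6 needs two averaging steps even for $n=3$, and why the general case needs a fixed-point theorem.

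To repair the proof you need the paper's argument. The set $K^X_\rho=\{\eta:\ell^X_\eta\le\ell^X_\rho\}$ is:
\begin{itemize}
\item nonempty, since it contains $\rho$;
\item convex, by your convexity result;
\item $\sigma$-invariant, since eigenvalues are inverted;
\item compact. This is the ingredient absent from your proposal. It follows from $\ell^H_\eta(\gamma)\le\ell^X_\eta(\gamma)$ together with the properness of Hilbert lengths of a filling collection \cite[Corollary 1.5]{LZ}.
\end{itemize}
Brouwer's theorem applied to $\sigma$ on $K^X_\rho$ then yields $j$.
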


\medskip

The strategy for proving the existence of the subdominating representation $j$  in both Theorem \ref{thm:main} and Theorem \ref{thm:transl} is to define a suitable compact convex subset ${K}_\rho \subset \mathcal{C}_n(\lambda) $ using the convex length functions, that the contragredient involution on $\text{Hit}_n(S)$ preserves;  the fixed-point (guaranteed by Brouwer) then yields the desired subdominating representation.  

\medskip

 In the final section, we consider the case $S$ has punctures, and positive representations to $\psltr$, and prove a subdomination result in the representation variety with prescribed peripheral Hilbert lengths (see Proposition \ref{prop:n3}). Such positive representations are parametrized by Fock-Goncharov coordinates with respect to an ideal triangulation (see \cite{FG} and \cite{FG07}); indeed, the work of Bonahon-Dreyer extended these to the case of a closed surface. In this setting, one can provide an explicit construction of a subdominating Fuchsian representation.

\medskip

 For the shear coordinates in  Teichm\"{u}ller space the convexity property is also inherited by length functions of measured laminations (see \cite{Theret}). In subsequent work we plan to explore analogous generalizations of the Hilbert length functional, in particular to H\"{o}lder geodesic currents (as already introduced in \cite{Dreyer}), and address the question of when a {strict} subdomination holds.

\medskip

\noindent\textbf{Acknowledgements.} The key ideas in this note, specifically the convexity of the log-spectral radius function in Corollary \ref{cor:convex} and the use of the Brouwer fixed-point theorem in the proof of Theorem \ref{thm:main},  are due to ChatGPT 5.6 Sol. These emerged in a conversation initiated by the author with the goal of settling the subdomination conjecture using  techniques developed in the recent joint work with Barman. The ideas were then verified by the author and the paper has been written entirely by the author with no AI-generated text or figures; ChatGPT 6 Astra was used only for a final editorial check (but not the actual editing).  The author takes full responsibility for the content of this article. This work was supported by the DST FIST program - 2021 [TPN - 700661].

\section{Preliminaries}

\subsection{Bonahon-Dreyer parametrization}
We briefly recall the Bonahon-Dreyer parametrization $\Phi$ mentioned in the Introduction, that was developed in \cite{BD14} and \cite{BD17}. Recall that a maximal lamination $\lambda$ on $S$ is a closed subset that is a disjoint union of complete geodesics (with respect to a choice of a hyperbolic metric) such that the complementary regions are ideal triangles; throughout we fix such a  lamination $\lambda$.

The key property of a  Hitchin representation $\rho:\Gamma \to \pslnr$ that we shall use is that there is an associated $\rho$-equivariant limit map $$\xi: \partial_\infty \widetilde{S} \to \mathcal{F}(\mathbb{R}^n)$$ where the target is the space of complete flags in $\mathbb{R}^n$. Moreover, this map satisfies some transversality properties that include: 
\begin{itemize}
    \item for any distinct points $x,y \in \partial_\infty \widetilde{S}$, the flags $\xi(x), \xi(y)$ are transverse, i.e.\ $\xi_i(x) \oplus \xi_{n-i}(y) = \mathbb{R}^n$ for any $1\leq i<n$, 
    \item for any distinct triple $x,y,z\in \partial_\infty\widetilde{S}$ the corresponding triple of flags is in general position, namely we have that $\xi_a(x) \oplus \xi_b(y) \oplus \xi_c(z) = \mathbb{R}^n$ whenever $a,b,c$ are positive integers adding up to $n$, 
\end{itemize}
where $\xi_i(x)$ denotes the $i$-th dimensional subspace of the flag $\xi(x)$. (See, for example, the notion of a \textit{Frenet} curve in \cite{Lab06}.) 

In \cite{FG} Fock-Goncharov introduced real parameters that classified a triple of flags $(F_1,F_2,F_3) \in \mathcal{F}(\mathbb{R}^n)$ in general position up to the action of $\pslnr$; these parameters are indexed by $\mathcal{B}_n = \{(a,b,c)\in\Z_{>0}^3:a+b+c=n\}$.

Given a Hitchin representation $\rho$, this defines, for any complementary triangle $\Delta$ in $\widetilde{S} \setminus \widetilde{\lambda}$, a collection of   $(n-1)(n-2)/2$ \textit{triangle parameters} $\{t_\alpha\ \vert\ \alpha \in   \mathcal{B}_n\} $ associated with the triple of flags associated with its ideal vertices (via the limit map $\xi$).

Moreover, for two adjacent triangles $\Delta_1$ and $\Delta_2$, Fock-Goncharov defined \textit{edge-invariants} $e_i$ indexed by $i \in \mathcal{A}_n = \{(a,b)\in\Z_{>0}^2:a+b=n\}$ associated with the common edge.  The quadruple of flags associated with the ideal vertices of this pair is then classified (up to the action of $\pslnr$) by the triangle invariants of $\Delta_1,\Delta_2$, together with these edge-invariants.

\medskip 

In fact, following \cite{MMMZ} (which in turn follows \cite[Proposition 5.1]{BD17}), one can define edge-invariants $e_i(P,Q)$  for any pair of such complementary triangles (\textit{plaques}), which need not be adjacent, using the \textit{slithering map} 
\begin{equation}\label{eq:slith}
    \Sigma_\rho: \tilde{\Lambda}^2 \to \slnr.
\end{equation}
Here, the domain is the space of pairs of (oriented) leaves of $\widetilde{\lambda}$, and the slithering map satisfies the property that $\Sigma_\rho(g_1,g_2)$ takes the flags associated with the endpoints of $g_2$ to those of $g_1$. For a more precise definition, as well as the existence of such a map, see \cite[Definition 4.11 and Theorem 4.12]{MMMZ}; in their work the target group is $\slnc$, while here we restrict to $\slnr$. 

Indeed, if $g_1,g_2$ are the separating sides of $P,Q$ respectively, then the edge-invariants $e_i(P,Q)$ are those associated with the ``adjacent" pair $P$ and $\Sigma_\rho(g_1,g_2)\cdot Q$ (see \cite[\S4.1.5]{MMMZ}).

The generalized shear-bend cocycle $(\alpha_\rho, \theta_\rho) \in \mathcal{C}_n(\lambda)$ corresponding to the representation $\rho$ is then defined as follows: for any  pair of distinct plaques $P,Q$ and $i\in\mathcal{A}_n$, and for a labeled plaque $x$ and $j\in\mathcal{B}_n$, we have 
\begin{equation}\label{eq:exp-coord}
    e_{i}(P,Q)=\exp\bigl(\alpha_\rho^{i}(P,Q)\bigr) \text{  and  } t_{j}(x)=\exp\bigl(\theta_\rho^{j}(x)\bigr).
\end{equation}
We shall denote these collections of coordinates by $e_\rho(P,Q)$ and $t_\rho(x)$ respectively. The total space of such cocyclic-pairs $(\alpha, \theta)$ is a vector space $\mathcal{Y}_n(\lambda; \mathbb{R})$ of dimension $N:=(n^2-1)(2g-2)$ -- see \cite[Definition 4.2, Proposition 4.13 and \S9.4]{MMMZ}. The above assignment is precisely the Bonahon-Dreyer parametrization  \eqref{eq:bd}, that is,  $\Phi([\rho]) = (\alpha_\rho, \theta_\rho)$. This is an embedding to the subset $\mathcal{C}_n(\lambda)$ that is an open polyhedral cone defined by finitely many linear equations and strict linear inequalities -- see \cite[Propositions 0.5 and 0.6]{BD17}.

\subsection{Monodromy formula}

Given a Hitchin representation $\rho$, there is a convenient formula for the monodromy matrix $\rho(\gamma)$ for $\gamma \in \Gamma$ in terms of the shear-bend cocycle $(\alpha_\rho, \theta_\rho)$ associated with the representation, that we shall describe here.

First, we consider the case of a \textit{punctured} surface, and the Fock-Goncharov coordinates of a positive representation, with respect to an ideal triangulation, which can be thought of as a \textit{finite} maximal lamination $\lambda$, that is, a maximal lamination with finitely many leaves between the punctures. Note that up to the action of $\Gamma$, there are finitely many ideal triangles, which are the complementary plaques in $\widetilde{S} \setminus \tilde{\lambda}$.

Following Fock-Goncharov (\cite{FG}), each (oriented) edge of a plaque $P$ is associated with a projective basis of $\mathbb{R}^n$, namely, the one in which the pair of flags $F^+,F^-$ associated with the two endpoints are \textit{opposite}. Following the convention in \cite{MMMZ}, one can use the flag $F^\prime$ associated with the third vertex of $P$ to determine a \textit{canonical snake basis} $\{b_1,b_2,\ldots, b_n\}$ as follows: $$b_i \in F^-_i \cap F^+_{n-i+1} \text{ and } F^\prime_1 = \langle b_1 + b_2 + \cdots +b_n\rangle.$$

For a curve $\gamma \in \Gamma$ that is transverse to $\lambda$, a determinant-one representative in the projective class of the monodromy $\rho(\gamma)$ can then be expressed as a finite product of ``building blocks":
\begin{equation}\label{eq:monfor}
A = T^{\delta_1} E \cdot T^{\delta_2}E \cdots  T^{\delta_m}E 
\end{equation} 
where each $\delta_i \in \{-1,1\}$.

\begin{figure}
\centering
\includegraphics[scale=.4]{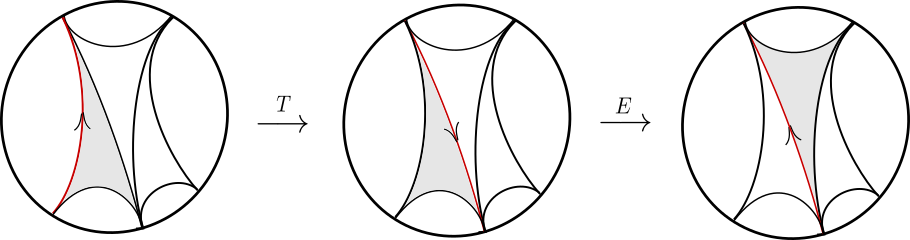}
\caption{The  canonical snake basis is associated with an oriented side (shown in red) of a plaque (shown shaded). The $T$ and $E$ matrices then record the change of basis when the plaque or oriented side change as shown.}
\label{fig:TE}
\end{figure}

Each $T$-matrix is a change-of-basis matrix from one side of a triangular plaque to another, and an $E$-matrix is the change-of-basis matrix that corresponds to choosing the other triangle adjacent to an edge for determining the basis. The above sequence of $T$- and $E$-matrices then corresponds to the plaques and edges intersected by an arc $\tilde{c}_\gamma \subset \widetilde{S}$ that is a  lift of $\gamma$. 

For later use, we note that these matrices can be further expressed as
\begin{equation}\label{eq:te}
    T(t) = M(t) \cdot S \qquad E(e) = D(e)\cdot S
\end{equation}
where $t$ and $e$ are the associated  triangle and edge parameters respectively, $M$ is a matrix whose dependence on the triangle parameters is explicit (see \cite[(3.7)]{BGup}), $D$ is an explicit diagonal matrix (see \cite[(3.8)]{BGup}), and $S$ is the anti-diagonal change-of-basis matrix associated with switching the orientation of an edge, having entries $\bigl(1,-1,1,-1,\ldots,(-1)^{n-1}\bigr)$ along the anti-diagonal and $0$s elsewhere. 

\medskip

\noindent The following notion will be used throughout the article:

\begin{defn}\label{defn:Lau} A \textit{generalized Laurent monomial} in variables $\{x_1,x_2,\ldots x_m\}$ is an expression of the form $$c \cdot x_1^{a_1}x_2^{a_2}\cdots x_m^{a_m}$$
where $a_i \in \mathbb{Q}$ and $c\in \mathbb{R}$. Such a monomial is called \textit{positive} if the scalar coefficient $c>0$.
\end{defn}

It is known that the entries of $M(t), D(e)$, and each ``building block" $T^{\pm 1}E$ are finite sums of positive generalized Laurent monomials in the corresponding triangle and edge parameters -- see \cite[Lemma 2.8]{BGup2} for a summary and \cite{BGup} for details.  Here \textit{rational} (and not just integer) powers of the parameters are needed due to the determinant-one normalization --  see, for example, \eqref{eq:2} and \eqref{eq:3} for the case when $n=3$.  As a consequence, each entry of the monodromy matrix $A$ in \eqref{eq:monfor} is a finite sum of such generalized Laurent monomials with positive coefficients. 

\medskip

Returning to the case of a closed surface with a maximal lamination $\lambda$, the arc $\tilde{c}_\gamma$ could cross infinitely many leaves. To handle this, the work in \cite{MMMZ} (their Theorem 4.12), when restricted to our ``real" setting,  defines the slithering map \eqref{eq:slith} as an infinite product
\begin{equation}\label{decomp-limit} 
\Sigma_\rho(g_1,g_2) = {\mathop{\overrightarrow{\prod}}\limits_{P\in\mathcal{P}(g_1,g_2)}} M(P)
\end{equation} 
where $M: \mathcal{P}(g_1,g_2) \to \slnr $ is a H\"{o}lder-extendable map (in the sense of \cite[Definition 5.1]{MMMZ}) and $\mathcal{P}(g_1,g_2)$ is the  (possibly infinite) set of plaques separating $g_1$ and $g_2$. Here the ordered product is a limit of any finite exhaustion $\mathcal{F}_n \nearrow   \mathcal{P}(g_1,g_2)$, that converges in $\slnr$ and is independent of the exhaustion by \cite[Proposition 5.2]{MMMZ} or \cite[Lemma 5.5]{BD17}.

For the monodromy formula in this case, we use the  work in  \cite[Section 7.4]{MMMZ} that gives an alternative  H\"{o}lder extendable map to define the slithering map.  Namely, in their notation, if $h$ is a leaf of the lamination and $g_x^h$ is the boundary of a plaque $x$, then there is a  H\"{o}lder-extendable map 
\begin{equation}\label{eq:Lfactor}
L_{\rho}:\mathcal{P}(g_x^h,h) \to \slnr
\end{equation}
such that 
\begin{equation}\label{eq:slith2}
\Sigma_\rho(h,g_x^h) = \overrightarrow{\prod\limits_{P}}\ {L_\rho}(P)
\end{equation}
where note that the order of the two arguments have been swapped -- see \cite[Lemma 7.11(2)]{MMMZ}. 

\medskip

As explained in \cite[Lemma 3.12 and 3.13]{BGup2} (see also \cite[Lemma 7.11]{MMMZ}), each $L_\rho(P)$ is a unipotent matrix (see \cite[Figure 4]{BGup2} for a summary) that has an explicit description in terms the shear parameters $e_\rho(x,P)$ and triangle parameters $t_\rho(P)$ in the canonical basis associated with  the leaf $g_x^h$ (considered as a side of $x$). We shall need the following consequence; throughout $D_\epsilon$ is the diagonal matrix with diagonal entries $\bigl(1,-1,1,-1,\ldots,(-1)^{n-1}\bigr)$, $J$ is the anti-diagonal matrix with $1$s along the anti-diagonal (and $0$s elsewhere), and note that $S$ in \eqref{eq:te} equals the product $D_\epsilon J$.

\begin{lem}\label{lem:form0}  The entries of the matrix $D_\epsilon^{-1}L_\rho(P)D_\epsilon$ are finite sums of generalized Laurent monomials in triangle and edge parameters of $\rho$ with positive coefficients. 
\end{lem}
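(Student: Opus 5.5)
Our plan is to read off the entries of $L_\rho(P)$ from the explicit formula in \cite[Lemma 3.12 and 3.13]{BGup2} (cf.\ \cite[Lemma 7.11]{MMMZ}) and track signs. There $L_\rho(P)$ is a product of the building blocks of the monodromy formula \eqref{eq:monfor}. Schematically, it transports the canonical basis of $g_x^h$ across to the plaque $P$ and then back along the two sides of $P$, and is written as a conjugate of the form
\[
L_\rho(P) = X_P^{-1}\, Y_P\, X_P ,
\]
where $X_P$ is a product of matrices $T^{\pm1}E$ determined by the edge-invariants $e_\rho(x,P)$ and $Y_P$ is built from $T(t_\rho(P))$. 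Before multiplying anything out, we rewrite every factor using \eqref{eq:te}, $T = M S$ and $E = D S$, together with $S = D_\epsilon J$.

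The key step is to move all the sign-carrying matrices $D_\epsilon$ to the outside. We use three facts. First, $J D_\epsilon J = \pm D_\epsilon$, with sign $(-1)^{n-1}$. Second, $D_\epsilon$ commutes with the diagonal matrices $D(e)$. Third, $D_\epsilon^{-1} M(t) D_\epsilon$ and $J M(t) J$ are obtained from $M(t)$ by flipping signs and permuting entries in a controlled way.

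Next we use that the building blocks $T^{\pm1}E$ themselves have positive generalized Laurent entries, by \cite[Lemma 2.8]{BGup2}. The product expressing $L_\rho(P)$ can therefore be grouped as $(T^{\delta_1}E)\cdots(T^{\delta_k}E)$ times a leftover factor of the form $S^{\pm1}$ or $D_\epsilon$. The leftover factor will be absorbed by the outer conjugation by $D_\epsilon$.

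Concretely, the aim is to show that $D_\epsilon^{-1} L_\rho(P) D_\epsilon$ equals, up to an overall sign, a product of positive building blocks together with permutation matrices $J$. If the resulting overall sign is $+1$, positivity of the entries is immediate. The reason is that sums and products of positive generalized Laurent monomials are again such sums, as recorded after Definition \ref{defn:Lau}.

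The main obstacle is that $L_\rho(P)$ is unipotent, so its diagonal entries equal $1$ and it typically has many zero entries. The global sign and any cancellations must therefore be handled honestly rather than by a blanket positivity argument.

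Our first attempt is a direct approach. We use the explicit unipotent form in \cite[Figure 4]{BGup2}: upper or lower triangular, with the $(i,j)$ entry equal to a signed sum of monomials in $e_\rho(x,P)^{1/n}$ and $t_\rho(P)^{1/n}$ with sign $(-1)^{i+j}$. Conjugating by $D_\epsilon$ multiplies the $(i,j)$ entry by $(-1)^{i+j}$, which cancels exactly this checkerboard sign pattern. This leaves zero entries and entries that are finite sums of positive generalized Laurent monomials.

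To verify the checkerboard pattern, we proceed by induction on the triangular structure. We write the unipotent matrix as a product of elementary unipotent factors, each of which is the $D_\epsilon$-conjugate of a matrix with nonnegative Laurent entries. The analogous statement for the matrices in the planar-network model of \cite{BGup} then closes the argument.
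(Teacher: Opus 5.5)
\textbf{There is a genuine gap: the step that carries all the content is never established.}

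Your ``direct approach'' rests on the claim that the $(i,j)$ entry of $L_\rho(P)$ is $(-1)^{i+j}$ times a positive generalized Laurent sum. But $(D_\epsilon^{-1}L_\rho(P)D_\epsilon)_{ij}=(-1)^{i+j}L_\rho(P)_{ij}$, so this checkerboard claim is literally equivalent to the lemma. Reading it off the figure in \cite{BGup2} restates the statement; it does not prove it. The proposed verification is only asserted: you factor $L_\rho(P)$ into elementary unipotent factors, each $D_\epsilon$-conjugate to a nonnegative matrix, and then appeal to ``the analogous statement'' in \cite{BGup}. You never say how the edge invariants $e_\rho(x,P)$ enter, or why they enter positively.

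Your first structural guess does not rescue this. Suppose $L_\rho(P)=X_P^{-1}Y_PX_P$ with $X_P$ a product of blocks $T^{\pm1}E$. Then the factor $X_P^{-1}$ has sign-alternating entries. So there is no regrouping into positive building blocks times a single leftover $S^{\pm1}$ or $D_\epsilon$ that the outer $D_\epsilon$-conjugation could absorb.

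\textbf{The missing idea is that the edge-invariant part of the change of basis is \emph{diagonal}.} The slithering map matches the endpoint flags of the relevant side of $P$ with those of $g_x^h$, so the two canonical bases differ by a diagonal rescaling recording the $e_i(x,P)$. Precisely, by \cite[Lemma 3.13]{BGup2}, $D_\epsilon^{-1}L_\rho(P)D_\epsilon$ is the conjugate of $J\widehat{M}J$ by a diagonal matrix $Q_+$, or of $\widehat{M}$ by a diagonal matrix $Q_-$. Which case occurs depends on the cyclic order of the vertices of $P$ relative to $x$. The entries of $Q_\pm$ are positive Laurent monomials in the $e_i(x,P)$. By \cite[Lemma 3.12]{BGup2}, $\widehat{M}$ has entries that are finite sums of positive generalized Laurent monomials in $t_\rho(P)$.

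Positivity then follows at once:
\begin{itemize}
\item conjugating by $J$ only permutes entries;
\item conjugating by $Q=\mathrm{diag}(q_1,\ldots,q_n)$ multiplies the $(i,j)$ entry by $q_j/q_i$ or $q_i/q_j$, which is again a positive monomial.
\end{itemize}
This is the paper's argument. Without the diagonal form of the edge-invariant conjugation, your argument has no way to control the signs.
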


\begin{proof}
    By Lemma 3.13 of \cite{BGup2}, the matrix $D_\epsilon^{-1}L_\rho(P)D_\epsilon$ is a conjugate of $J\widehat{M}J$ or $\widehat{M}$ by diagonal matrices $Q_+$ or $Q_-$ respectively, depending on the cyclic order of the vertices of $P$ relative to $x$. Here, $Q_\pm$ are matrices whose entries are Laurent monomials in edge invariants $e_i(x,P)$  -- we refer to the statement of Lemma 3.13 of that paper for details. Moreover, $\widehat{M}$ is matrix which, by Lemma 3.12 of \cite{BGup2},  has entries that are finite sums of generalized Laurent monomials in the triangle parameters $t(P)$, with positive coefficients.  Thus, each entry of the matrix $D_\epsilon^{-1}L_\rho(P)D_\epsilon$ is then a finite sum of Laurent monomials in these parameters, with positive coefficients, as desired. 
\end{proof}

\noindent As a consequence, we obtain:

\begin{lem}\label{lem:form}  For any non-trivial $\gamma \in \Gamma$ such that its axis is not a leaf of $\tilde{\lambda}$
the monodromy  $\rho(\gamma)$ has, up to conjugation,  a representative $A \in \slnr$ that can be approximated by such matrices  $\{A_N\}_{N\geq 1}$, that is  $A_N \to A$ as $N\to \infty$, such that the entries of each $A_N$ are finite sums of positive generalized Laurent monomials in triangle and edge parameters of $\rho$ (with respect to $\lambda$).  
\end{lem}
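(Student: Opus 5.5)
The plan is to write $\rho(\gamma)$, up to conjugation, as a finite product of slithering maps and ``plaque-crossing'' matrices, and then truncate each infinite slithering product to get the $A_N$.

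\emph{Decomposition of the monodromy.} Since the axis of $\gamma$ is not a leaf of $\tilde\lambda$, choose a plaque $x$ crossed by a lift $\tilde c_\gamma$ of $\gamma$. Then $\gamma x$ is another plaque, and $\tilde c_\gamma$ passes from $x$ to $\gamma x$. Because $\gamma$ is transverse to $\lambda$ and $S$ is compact, the arc meets only finitely many plaques whose ``gaps'' carry genuine triangle contributions at a given scale. Following \cite{MMMZ} and the proof of \cite[Proposition 5.1]{BD17}, write $\rho(\gamma)$ in the canonical snake basis of a side of $x$ as
\[
A = \Sigma_\rho(h, g_x^h)\cdot T^{\pm1}E\cdots
\]
that is, as a finite product of building blocks $T^{\delta}E$ (coming from crossing the finitely many plaques that one chooses to keep track of, together with $x$ and $\gamma x$) interspersed with slithering maps between consecutive leaves. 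The simplest version is a single slithering factor $\Sigma_\rho(h,g_x^h)$ followed by a change of basis from the side $h$ of $\gamma x$ back to $x$'s basis via $\rho(\gamma)$-equivariance. The result is expressed in the $E$-type basis change using the edge invariants $e(x,\gamma x)$ of \eqref{eq:exp-coord}.

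\emph{Truncation.} By \eqref{eq:slith2}, the slithering factor is the ordered infinite product $\overrightarrow{\prod}_P L_\rho(P)$ over $P\in\mathcal P(g_x^h,h)$. This product is the limit of the finite ordered products over an exhaustion $\mathcal F_N\nearrow \mathcal P(g_x^h,h)$, and the limit is independent of the exhaustion. Define $A_N$ by replacing the slithering factor with the finite product over $\mathcal F_N$. Then $A_N\to A$ by continuity of matrix multiplication. Each $A_N$ has determinant one, since every $L_\rho(P)$ is unipotent and the other blocks are normalized.

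\emph{Positivity, which is the main obstacle.} Individually, the $L_\rho(P)$ are only positive after conjugation by $D_\epsilon$ (Lemma \ref{lem:form0}). The building blocks $T^{\pm1}E$ are positive without conjugation, and $S=D_\epsilon J$ involves signs. The approach is to conjugate the whole product by $D_\epsilon$, replacing $A$ by $D_\epsilon^{-1}AD_\epsilon$, which is allowed since the statement is up to conjugation. Then
\[
D_\epsilon^{-1}A_N D_\epsilon = \Bigl(\overrightarrow{\prod_{P\in\mathcal F_N}} D_\epsilon^{-1}L_\rho(P)D_\epsilon\Bigr)\cdot D_\epsilon^{-1}B D_\epsilon,
\]
where $B$ is the finite product of building blocks and basis changes. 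The first factor has positive entries by Lemma \ref{lem:form0}. The remaining difficulty is to show that $D_\epsilon^{-1}BD_\epsilon$ also has entries that are sums of positive generalized Laurent monomials. I would handle this by checking how $D_\epsilon$ interacts with $M$, $D$ and $S$: diagonal matrices commute with $D$, and $J D_\epsilon J=\pm D_\epsilon$. So conjugating $T=MS$ and $E=DS$ by $D_\epsilon$ should produce the positive matrices of \cite[Lemma 2.8]{BGup2}, up to a global sign, which is then absorbed by the determinant-one normalization, using the conventions of \cite[Lemma 3.13]{BGup2}.

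Finally, products and sums of positive generalized Laurent monomials are again of this form. This yields the positivity of the entries of each conjugated $A_N$, in the triangle and edge parameters of $\rho$.
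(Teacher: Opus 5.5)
There is a genuine gap in the positivity step. You correctly identify it as the crux, but you resolve it with a false claim.

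\textbf{Factorwise $D_\epsilon$-conjugation does not give positivity.} Conjugating by $D_\epsilon$ does \emph{not} make $T=MS$ and $E=DS$ positive up to a global sign. Since $D$ commutes with $D_\epsilon$ and $D_\epsilon J D_\epsilon=(-1)^{n-1}J$, one gets
$D_\epsilon^{-1}TD_\epsilon=(-1)^{n-1}D_\epsilon MJ$ and $D_\epsilon^{-1}ED_\epsilon=DJD_\epsilon$. Both have entries of both signs.

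Your outline is also internally inconsistent. You note that the blocks $T^{\pm1}E$ are already positive. But $D_\epsilon^{-1}XD_\epsilon$ multiplies the $(i,j)$ entry of $X$ by $(-1)^{i+j}$. So a positive block with a nonzero entry at a position with $i+j$ odd acquires negative entries after conjugation; for example, the $(2,1)$ entry $xy$ in \eqref{eq:2} becomes $-xy$.

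Hence, for the word you actually write, $\Sigma_\rho\cdot T^{\pm1}E\cdots$, the factor $D_\epsilon^{-1}BD_\epsilon$ is not positive, and positivity of $D_\epsilon^{-1}A_ND_\epsilon$ does not follow. Moreover, that word is not a cyclic rotation of the correct one below, so it is not even justified as a representative of $\rho(\gamma)$.

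\textbf{What is missing: the precise decomposition.} By \cite[Lemma 3.9]{BGup2},
$A=T(t_\rho(Q))^\delta\,\Sigma_\rho(g_Q,g_P)\,E(e_\rho(P,Q))$,
where $P$ and $Q=\gamma P$ are the initial and terminal plaques of $\tilde c_\gamma$. The slithering factor sits \emph{between} $T^\delta$ and $E$, not between building blocks.

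The paper then does not conjugate $A$ at all. It extracts the required sign matrices from the two copies of $S=D_\epsilon J$ flanking $S_N$:
\begin{itemize}
\item for $\delta=1$: $TS_NE=(-1)^{n-1}MJ(D_\epsilon^{-1}S_ND_\epsilon)DJ$;
\item for $\delta=-1$: $T^{-1}S_NE=(S^{-1}M^{-1}S)J(D_\epsilon^{-1}S_ND_\epsilon)DJ$.
\end{itemize}
The positivity of $S^{-1}M^{-1}S$ is a separate input from \cite[proof of Proposition 3.10]{BGup}, which your sketch never invokes.

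\textbf{How your idea could be rescued.} A global $D_\epsilon$-conjugation does work after rotating the correct word to $\Sigma_\rho\,E\,T^\delta$. One checks
$D_\epsilon^{-1}ETD_\epsilon=(-1)^{n-1}DJMJ$ and $D_\epsilon^{-1}ET^{-1}D_\epsilon=DJ(S^{-1}M^{-1}S)J$.
Here positivity comes from the specific product $ET^\delta$, where the signs in $E$ and in $T^\delta$ cancel each other, not from the individual factors. This requires pinning down the decomposition, which your phrase ``finite product \ldots interspersed with slithering maps'' leaves open.
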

\begin{proof}
As in the proof of \cite[Lemma 3.16]{BGup2}, if the lift $\tilde{c}_\gamma \subset \widetilde{S}$ of $\gamma$  starts in the interior of a plaque $P$, crossing its side $g_P$, and terminates in the plaque $Q = \gamma \cdot P$ after crossing the side $g_Q$, then by \cite[Lemma 3.9]{BGup2} the matrix
\begin{equation}\label{eq:closed-monodromy-factorization}
 A
 =T\bigl(t_\rho(Q)\bigr)^\delta
  \Sigma_\rho(g_Q,g_P)
  E\bigl(e_\rho(P,Q)\bigr)
\end{equation}
sends the  canonical snake basis corresponding to $g_P$  to its $\gamma$-translate, and is hence a representative of $\rho(\gamma)$.

We know by \eqref{eq:slith2} and \cite[Proposition 5.2]{MMMZ} that the slithering factor $ \Sigma_\rho(g_Q,g_P)$ admits a sequence of approximations 
\begin{equation}\label{eq:approx}
S_N =  {\prod\limits_{P\in \mathcal{F}_N}} {L_\rho}(P)
\end{equation}
where $\mathcal{F}_N \nearrow \mathcal{P}(g_P,g_Q)$ is an exhaustion by finite sets. Note that by Lemma \ref{lem:form0}, the entries of the conjugate 
\begin{equation}\label{eq:sn}
D_\epsilon^{-1} S_N D_\epsilon = (D_\epsilon^{-1} L_\rho(P_1) D_\epsilon)( D_\epsilon^{-1} L_\rho(P_2) D_\epsilon )\cdots (D_\epsilon^{-1} L_\rho(P_k) D_\epsilon)
\end{equation}
are finite sums of positive generalized Laurent monomials in  triangle and edge parameters of $\rho$. Hence we have the sequence of approximations 
\begin{equation}\label{eq:anapp}
A_N = T\bigl(t_\rho(Q)\bigr)^\delta \cdot 
  S_N\cdot 
  E\bigl(e_\rho(P,Q)\bigr)
  \end{equation}
  
  such that $A_N \to A$.

\medskip

\noindent There are two cases; for both we use \eqref{eq:te} and the identities $S^2= (-1)^{n-1}I, D_\epsilon^2 = J^2 = I$ and $S = D_\epsilon J$:

\begin{itemize}
    \item[(i)] When  $\delta = +1$, we have $$TS_NE = (MD_\epsilon J)S_N (DD_\epsilon J) = M(D_\epsilon J D_\epsilon) (\De^{-1} S_N \De) DJ = (-1)^{n-1} MJ (\De^{-1} S_N \De) DJ$$
where for the second equality  we have used that $D$ and $\De$ commute. Since we are working in a projective class, in the case $n$ is even we define $A_N$ be $-TS_NE$ for it to have non-negative entries. We conclude that all the entries of $A_N$ are finite sums of generalized Laurent monomials in the triangle and edge parameters, since the same is true for each of the factors on the right. 

\item[(ii)] When $\delta = -1$, we can write
$$T^{-1}S_NE = (S^{-1}M^{-1}S) (S^{-1}S_N  S) (S^{-1}DS) = (S^{-1}M^{-1}S)J (\De^{-1} S_N \De)DJ $$
where note that the first factor on the right has entries that are finite sums of generalized Laurent monomials by \cite[proof of Proposition 3.10]{BGup}. Since the same is true for the other factors, once again, the product has entries with the same property.

\end{itemize}

\noindent Thus, in both cases the approximating matrix $A_N$ has the desired property, and we are done.  \end{proof}

\noindent Note that the above construction only depends on the intersection pattern of $\gamma$ with the plaques in the complement of $\tilde{\lambda}$; as one varies $\rho \in \text{Hit}_n(S)$ the decomposition \eqref{eq:anapp} remains the same and only the parameters $t_\rho,e_\rho$ vary. Thus, we in fact obtain the following more uniform statement:

\begin{cor}\label{cor:form} For any $\gamma$ as in the above Lemma, and each $N\geq 1$, there is a matrix-valued function $A_N: \mathcal{C}_n(\lambda) \to \slnr$ whose entries are finite sums of positive generalized Laurent monomials in the triangle and edge parameters, such that $A_N(v) \to A(v)$ as $N\to \infty$, where $A(v)$ is a representative in $\slnr$ conjugate to $\rho(\gamma)$ for $\rho = \Phi^{-1}(v)$.
\end{cor}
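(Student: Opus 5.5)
The plan is to observe that the proof of Lemma \ref{lem:form} produces the approximants by a recipe that depends only on the combinatorics of $\gamma$ relative to $\tilde{\lambda}$, and never on $\rho$ itself. The parameters are then substituted in as functions of $v\in\mathcal{C}_n(\lambda)$.

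\textbf{Step 1: fix the combinatorial data once.} Fix a lift $\tilde{c}_\gamma$, the starting plaque $P$ and its side $g_P$, the terminal plaque $Q=\gamma\cdot P$ and its side $g_Q$, and the sign $\delta$. Fix also the exhaustion $\mathcal{F}_N\nearrow\mathcal{P}(g_P,g_Q)$ by finite sets, ordered as in \eqref{eq:approx}. All of these are determined by the lamination $\tilde{\lambda}$ and the curve alone, since the plaques crossed and their ordering are topological. The cyclic-order data that decides, in Lemma \ref{lem:form0}, whether $D_\epsilon^{-1}L_\rho(P')D_\epsilon$ is a diagonal conjugate of $J\widehat{M}J$ or of $\widehat{M}$ is likewise combinatorial.

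\textbf{Step 2: define $A_N$ on all of $\mathcal{C}_n(\lambda)$.} For $v=(\alpha,\theta)$, set $t(x)=\exp(\theta(x))$ and $e(x,P')=\exp(\alpha(x,P'))$ as in \eqref{eq:exp-coord}. Define $A_N(v)$ by formula \eqref{eq:anapp}, with the factors given by the explicit matrices $M$, $D$, $\widehat{M}$ and $Q_\pm$ evaluated at these parameters. Include the sign correction $(-1)^{n-1}$ when $n$ is even, exactly as in case (i) of Lemma \ref{lem:form}. Since the formulas from \cite{BGup,BGup2} are explicit, Lemma \ref{lem:form0} and the case analysis of Lemma \ref{lem:form} show that each entry of $A_N(v)$ is the same finite sum of positive generalized Laurent monomials for every $v$. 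Only the values of the parameters change. Each factor has determinant one, so $A_N(v)\in\slnr$.

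\textbf{Step 3: convergence.} For each fixed $v$, let $\rho=\Phi^{-1}(v)$, which exists because $\Phi$ is a homeomorphism onto $\mathcal{C}_n(\lambda)$. By construction, the parameters of $\rho$ are the exponentials of the coordinates of $v$. So $A_N(v)$ coincides with the approximant built in Lemma \ref{lem:form} for $\rho$. By \eqref{eq:slith2} and \cite[Proposition 5.2]{MMMZ}, $S_N\to\Sigma_\rho(g_Q,g_P)$. Hence $A_N(v)\to A(v)$, the matrix \eqref{eq:closed-monodromy-factorization}. By \cite[Lemma 3.9]{BGup2}, $A(v)$ represents $\rho(\gamma)$ up to conjugation, with sign normalized so that it lies in $\slnr$.

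\textbf{Main obstacle.} The only subtle point is making sure that the factor matrices $L_\rho(P')$ from \cite[Lemma 7.11]{MMMZ} really are given, plaque by plaque, by one fixed formula in the parameters $t(P')$ and $e(x,P')$. They must not depend on $\rho$ through some other choice, such as a normalization of bases. I would check this against \cite[Lemmas 3.12, 3.13]{BGup2}, where these factors are written explicitly in the canonical snake basis of $g_x^h$. Once that is confirmed, the statement follows immediately. Only pointwise convergence in $v$ is claimed, so no uniformity is needed.
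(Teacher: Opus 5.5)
Your proposal is correct and matches the paper's own justification. The paper simply remarks that the decomposition \eqref{eq:anapp} from the proof of Lemma \ref{lem:form} depends only on how $\tilde{c}_\gamma$ meets the plaques of $\tilde{\lambda}$, so as $v$ varies only the parameters $t_\rho, e_\rho$ (exponentials of linear functionals of $v$) change; your Steps 1--3 spell this out, and the point you flag as the main obstacle is exactly what Lemma \ref{lem:form0} supplies.
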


\section{Convexity of Hilbert length}

Throughout this section, we fix a maximal lamination $\lambda$ on $S$; as in \S2.1 let $\mathcal{C}_n(\lambda)$ be the convex cone in the vector space $\mathcal{Y}_n(\lambda)$ of shear-bend cocycles parametrizing the Hitchin component. From the discussion in \S2.1, it follows that any triangle parameter $t_a$ (corresponding to some complementary ideal triangle $\Delta$) or edge parameter $e_i$ defines a function on $\mathcal{C}_n(\lambda)$ of the form $\exp(u)$ where $u$ is a linear functional on $\mathcal{Y}_n(\lambda)$. 

\medskip
\noindent The key observation is this basic fact:

\begin{lem}\label{lem:convex} Let $u_1,u_2,\ldots u_m$ be linear functionals on a finite-dimensional vector space $V$, and let $x_i = \exp(u_i)$  for $1\leq i\leq m$. Then the logarithm of a finite sum of generalized Laurent monomials in $\{x_1,x_2,\ldots, x_m\}$ with positive coefficients is a convex function on $V$.
\end{lem}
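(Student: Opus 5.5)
Each positive generalized Laurent monomial becomes the exponential of an affine function on $V$, and $\log$ of a sum of such exponentials is a log-sum-exp, hence convex. Concretely, write a monomial as $c\, x_1^{a_1}\cdots x_m^{a_m}$ with $c>0$ and $a_i\in\mathbb{Q}$. Then on $V$ it equals
\[
c\,\exp\Bigl(\sum_{i=1}^m a_i u_i(v)\Bigr)=\exp\bigl(\ell(v)+\log c\bigr),
\]
where $\ell=\sum_i a_i u_i$ is a linear functional on $V$. A finite sum $F=\sum_{k=1}^K c_k x^{a^{(k)}}$ therefore has the form $F(v)=\sum_{k=1}^K \exp(\phi_k(v))$, with each $\phi_k$ affine on $V$. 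In particular $F>0$ everywhere, so $\log F$ is defined. I assume the sum is nonempty; an empty sum would give $\log 0$, which should be excluded.

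It therefore suffices to show that $g(v)=\log\sum_k e^{\phi_k(v)}$ is convex. Since the restriction of an affine map to a line is affine, it is enough to prove convexity of $h(s)=\log\sum_k e^{\alpha_k s+\beta_k}$ as a function of $s\in\mathbb{R}$.

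For $h$ I would argue by Hölder's inequality rather than by computing second derivatives. Let $t\in(0,1)$, $s=ts_1+(1-t)s_2$, and set $p=1/t$, $q=1/(1-t)$. Then
\[
\sum_k e^{\alpha_k s+\beta_k}=\sum_k \bigl(e^{\alpha_k s_1+\beta_k}\bigr)^{t}\bigl(e^{\alpha_k s_2+\beta_k}\bigr)^{1-t}\le\Bigl(\sum_k e^{\alpha_k s_1+\beta_k}\Bigr)^{t}\Bigl(\sum_k e^{\alpha_k s_2+\beta_k}\Bigr)^{1-t}.
\]
Taking logarithms gives $h(s)\le t\,h(s_1)+(1-t)\,h(s_2)$. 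The same computation can be done directly on $V$, with $v=tx+(1-t)y$ in place of $s$. That gives \eqref{eq:cvx}-type convexity of $\log F$ on all of $V$, and therefore on any convex subset of $V$, such as $\mathcal{C}_n(\lambda)$.

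There is no real obstacle in this argument. The only points to check are that the coefficients are positive, which is needed both for $\log c$ to exist and for Hölder's inequality to apply to nonnegative terms, and that rational exponents cause no difficulty, since after exponentiating they simply become real coefficients of the linear functional $\ell$.
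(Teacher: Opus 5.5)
Your proof is correct and is essentially the paper's argument: each positive generalized Laurent monomial is $c\,e^{u}$ for a linear functional $u$, and H\"older's inequality with exponents $1/t$ and $1/(1-t)$ gives $P(tx+(1-t)y)\le P(x)^{t}P(y)^{1-t}$, which yields convexity of $\log P$ after taking logarithms. The reduction to lines is unnecessary, since, as you note, the same computation works directly on $V$; your remark excluding the empty sum is a harmless extra precision.
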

\begin{proof}
Note that a generalized Laurent monomial in $\{x_1,x_2,\ldots, x_m\}$ is of the form $\exp(u_\alpha)$ for some linear functional $u_\alpha:V \to \mathbb{R}$.  Let  $$P =\sum\limits_\alpha c_\alpha \exp({u_\alpha})$$
be the function on $V$, where each $u_\alpha$ is a linear functional and each $c_\alpha>0$.  Then  we can write
\begin{equation*}
    P((1-t)x + ty) =  \sum\limits_\alpha c_\alpha \exp({(1-t)u_\alpha(x) + t u_\alpha(y)}) =  \sum\limits_\alpha (c_\alpha \exp({u_\alpha(x)}))^{1-t} (c_\alpha \exp({u_\alpha(y)}))^{t} 
\end{equation*}
which by H\"{o}lder's inequality
\begin{equation*}
\leq \left(\sum\limits_\alpha  c_\alpha \exp({u_\alpha(x)}) \right)^{1-t} \left(\sum\limits_\alpha  c_\alpha \exp({u_\alpha(y)}) \right)^{t} = P(x)^{1-t} P(y)^t.
\end{equation*}
Taking a logarithm, we obtain the desired convexity.     
\end{proof}

\medskip

\noindent The following corollary is also implied by the result of Kingman in \cite{Kingman}.

\begin{cor}\label{cor:convex} Let $V$ be a convex subset of a finite-dimensional vector space, and $M:V \to \text{M}_{n\times n}(\mathbb{R})$ be a matrix-valued function such that the matrix entries are finite sums of positive generalized Laurent monomials in  $\{x_1,x_2,\ldots, x_m\}$ where $x_i = \exp(u_i)$ and $u_i$ are linear functionals on $V$.  Then if the spectral radius $\lambda_n(M(v))>0$ for any $v\in V$,
the log-spectral radius function $$v \mapsto \log(\varrho(M(v)))$$ is convex.
\end{cor}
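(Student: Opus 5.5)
The plan is to use Gelfand's formula, $\varrho(M)=\lim_{k\to\infty}\|M^k\|^{1/k}$, with a norm chosen so that $\log\|M^k\|$ is visibly a logarithm of a positive generalized Laurent polynomial. Since every $M(v)$ has entries that are sums of positive terms, $M(v)$ has nonnegative entries. For such matrices the entrywise $\ell^1$ norm $\|B\|_1=\sum_{i,j}|b_{ij}|$ is a submultiplicative matrix norm, and it reduces to the sum of the entries, $\|B\|_1=\sum_{i,j} b_{ij}$. Hence Gelfand's formula gives
\begin{equation*}
\log\varrho(M(v))=\lim_{k\to\infty}\frac1k\log\Bigl(\sum_{i,j}\bigl(M(v)^k\bigr)_{ij}\Bigr).
\end{equation*}

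The key steps are as follows.

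First, the set of finite sums of positive generalized Laurent monomials in $x_1,\dots,x_m$ is closed under addition and multiplication. A product of two such monomials is again a positive generalized Laurent monomial, with rational exponents adding and positive coefficients multiplying. Consequently each entry of $M(v)^k$, and also the total sum $\sum_{i,j}(M(v)^k)_{ij}$, is a finite sum of positive generalized Laurent monomials in the $x_i=\exp(u_i)$.

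Second, Lemma \ref{lem:convex} applies to this total sum, so $f_k(v):=\frac1k\log\sum_{i,j}(M(v)^k)_{ij}$ is convex on $V$. The lemma is stated on the whole vector space, but its restriction to the convex set $V$ is of course convex there. These $f_k$ are finite because the sum is strictly positive: it is a nonempty sum of positive terms. The sum could be empty only if $M(v)^k=0$, but then $\varrho(M(v))=0$, which the hypothesis $\varrho(M(v))>0$ excludes. This is the only place that hypothesis is used, and it also makes the limit a real number rather than $-\infty$.

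Third, a pointwise limit of convex functions is convex, because the defining inequality passes to the limit. Since $f_k(v)\to\log\varrho(M(v))$ for every $v\in V$, the log-spectral radius is convex.

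I expect no serious obstacle. The one point needing care is to justify that Gelfand's formula may be used with $\|\cdot\|_1$ and that this norm equals the plain sum of entries, which relies on the entries being nonnegative. Gelfand's formula holds for any norm on $M_{n\times n}(\mathbb{R})$, by equivalence of norms in finite dimensions, so submultiplicativity is not even needed. Alternatively, one could avoid norms altogether by using Perron--Frobenius theory for nonnegative matrices, or by citing Kingman's theorem as the paper notes, but the Gelfand-formula route keeps the argument self-contained given Lemma \ref{lem:convex}.
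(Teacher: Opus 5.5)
Your proposal is correct and follows essentially the same route as the paper's proof: Gelfand's formula with the entrywise $\ell^1$ norm, which for nonnegative matrices is just the sum of the entries, together with the closure of positive generalized Laurent polynomials under products, Lemma~\ref{lem:convex}, and passage to the pointwise limit. Your extra remarks, that $\varrho(M(v))>0$ rules out an empty sum and that Gelfand's formula holds for any norm, are accurate and fill in details the paper leaves implicit.
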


\begin{proof}
By our assumption the entries of $M(v)$ are non-negative for any $v\in V$. In particular, the norm $\lVert A\rVert_s := \sum\limits_{i,j} \lvert A_{ij}\rvert =  \sum\limits_{i,j} A_{ij}$ is the $l_1$ matrix norm and the log-spectral radius 
$$\log(\varrho(M(v))) = \lim\limits_{k\to \infty} \frac{1}{k}\log (\lVert M(v)^k \rVert_s)$$
by Gelfand's formula (see for example, \cite[Corollary 5.6.14]{HJ}). 

For any fixed integer $k\geq 1$, each entry of the matrix power $M(v)^k$ is also a finite sum of generalized Laurent monomials in  $\{x_1,x_2,\ldots, x_m\}$ with positive coefficients. Hence by the previous Lemma, each term in the right-hand side is convex, and so is the limit as $k\to \infty$.    
\end{proof}

\noindent The convexity of Hilbert lengths (Theorem \ref{thm:convex}) then follows from the observation about the monodromy formula in \S2.2:

\begin{proof}[Proof of Theorem \ref{thm:convex}]

Let $\gamma \in \Gamma$; assume first that $\gamma$ is not a component of the lamination $\lambda$ or a power of such a closed curve (equivalently, its axis is not a leaf of $\tilde{\lambda}$). For any $v\in \mathcal{C}_n(\lambda)$ that corresponds to $\rho_v := \Phi^{-1}(v) \in \text{Hit}_n(S)$, there is a representative $A_\gamma(v) \in \slnr$ in the  projective class of the monodromy matrix $\rho_v(\gamma)\in \pslnr$  such that $A_N(v) \to A_\gamma(v)$ as $N\to \infty$  where by Corollary \ref{cor:form}  each $A_N(v)$ satisfies the hypotheses of Corollary \ref{cor:convex}. Note that each triangle and edge parameter is of the form $\exp(u)$ for some linear functional $u$ by \eqref{eq:exp-coord}.  

Hence, $\log \varrho(A_N(v))$ is convex; by the continuity of spectral radius, so is the limit $\log \varrho(A_\gamma(v))$ as $N\to \infty$. The same argument applied to the inverse curve $\gamma^{-1}$ then shows that $\log \varrho(A_{\gamma^{-1}}(v))$ is also convex.
Since the spectral radius $\varrho(A_{\gamma^{-1}}(v))$ is the reciprocal of the smallest  eigenvalue (i.e. $1/\lambda_1$) of  $\rho_v(\gamma)$, the sum $$L_\gamma(v) =  \log \varrho(A_\gamma(v)) + \log \varrho(A_{\gamma^{-1}}(v))$$ is precisely the Hilbert length function of the curve $\gamma$ for the Hitchin representation $\rho_v$.  
The sum of two convex functions is also convex, and this proves \eqref{eq:cvx}.

It remains to handle the case when $\gamma$ coincides with a component of $\lambda$ (or a power); here we can apply \cite[Theorem 7.5]{BD17} to show that the length function $L_\gamma(v)$ is in fact \textit{linear} along any line in the cone $\mathcal{C}_n(\lambda)$. Indeed, from \cite{BD17} we know that the vector space of generalized shear-bend cocycles can be interpreted as a relative homology of a train-track neighborhood; their Theorem 7.5 then asserts that the $i$-th length function $\ell^i_{\rho_v}(\gamma) = \log \lvert \lambda_{i+1}/\lambda_i\rvert$ (where $1\leq i\leq n-1$)  equals the intersection number of $\gamma$ with the $\alpha$-component of the shear-bend cocycle corresponding to $v$, and is hence linear as a function of $v$. Since the Hilbert length $L_\gamma$ is the sum of these $(n-1)$ length functions, it is also linear. 
\end{proof}

\section{Proof of Theorem \ref{thm:main}}

Let $\mathcal{C}_n(\lambda)$ be the convex cone identified with the Hitchin component $\text{Hit}_n(S)$ via the Bonahon-Dreyer parametrization; let $\rho$ be a Hitchin representation.

\medskip

\noindent Let 
\begin{equation}\label{eq:inv}
\sigma: \mathcal{C}_n(\lambda) \to \mathcal{C}_n(\lambda)
\end{equation}
be the involution induced by the contragredient involution on $\text{Hit}_n(S)$ defined by $[\eta] \mapsto [\eta^\ast]$ where $[\eta^\ast]$ is the (conjugacy class) of the representation defined by the inverse-transpose $\eta^\ast(\gamma) = (\eta(\gamma)^{-1})^T$ for any $\gamma \in \Gamma$. 

\noindent The following facts are well-known (see, for example, \cite[\S2.5]{BCL}):
\begin{itemize}
    \item[(a)] The fixed-point set of $\sigma$ is precisely the set of representations that correspond to  $\text{Hit}_n(S)^\prime$, that is, taking values in the subgroup $H_n \subset \pslnr$ as defined in the Introduction.  
    \item[(b)] The Hilbert length of a curve  is invariant under the involution, namely $\ell^H_\rho(\gamma) = \ell^H_{\rho^\ast}(\gamma)$ for any $\gamma \in \Gamma$. Indeed, the eigenvalues of $\rho^\ast(\gamma)$ are $\lambda_n^{-1}< \lambda_{n-1}^{-1} <\cdots <\lambda_1^{-1}$ -- \textit{cf.} \eqref{eq:hl}.
\end{itemize}
In what follows we shall refer to these as ``property (a)" and ``property (b)".

\medskip

\begin{lem}\label{lem:setK}
    The subset 
    \begin{equation}\label{eq:Kx0}
    K_\rho =  \{ \eta \in \text{Hit}_n(S)\ \vert \ell^H_\eta(\gamma) \leq \ell^H_\rho(\gamma) \text{ for all } \gamma \in \Gamma\}
     \end{equation}
    is a compact convex set in $\mathcal{C}_n(\lambda)$ that is invariant under the contragredient involution, that is, $\sigma(K_\rho) \subset K_\rho$.
\end{lem}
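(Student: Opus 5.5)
Three properties of $K_\rho$ (viewed via $\Phi$ as a subset of $\mathcal{C}_n(\lambda)$) have to be checked: invariance under $\sigma$, convexity, and compactness. Nonemptiness is clear since $\rho \in K_\rho$.

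\emph{Invariance.} If $\eta \in K_\rho$, then property (b) gives $\ell^H_{\eta^\ast}(\gamma) = \ell^H_\eta(\gamma) \leq \ell^H_\rho(\gamma)$ for every $\gamma$. Hence $\sigma(\Phi(\eta)) \in \Phi(K_\rho)$.

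\emph{Convexity.} We have
\[
\Phi(K_\rho) = \bigcap_{\gamma \in \Gamma} \{ z \in \mathcal{C}_n(\lambda) : L_\gamma(z) \leq \ell^H_\rho(\gamma)\}.
\]
Each set in this intersection is a sublevel set of the function $L_\gamma$, which is convex on $\mathcal{C}_n(\lambda)$ by Theorem \ref{thm:convex}. A sublevel set of a convex function on a convex domain is convex, and an intersection of convex sets is convex. Since $\mathcal{C}_n(\lambda)$ is itself convex, $\Phi(K_\rho)$ is a convex subset of it.

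\emph{Compactness: closedness.} Each $L_\gamma$ is continuous, because eigenvalues depend continuously on the representation and $\Phi$ is a homeomorphism. So each sublevel set is relatively closed in $\mathcal{C}_n(\lambda)$, and so is their intersection.

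\emph{Compactness: the main obstacle.} It remains to show that $\Phi(K_\rho)$ is bounded and does not accumulate on the boundary of the open cone $\mathcal{C}_n(\lambda)$. Both follow from properness: I need that the set of representations whose Hilbert length spectrum is bounded by that of a fixed $\rho$ is relatively compact in $\text{Hit}_n(S)$. The plan is as follows.

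\begin{itemize}
\item Fix a finite set $\gamma_1,\dots,\gamma_k$ of curves filling $S$, for instance a set whose length functions give a proper map on $\text{Hit}_n(S)$.
\item Use the known properness of finitely many length functions on the Hitchin component. Such results are available from Labourie's work on the energy functional and from the properness of the length-function map. Bounds $\ell^H_\eta(\gamma_i) \leq \ell^H_\rho(\gamma_i)$ then confine $\eta$ to a compact subset of $\text{Hit}_n(S)$.
\item Alternatively, argue directly in the coordinates. If $\eta_m \in K_\rho$ leaves every compact set, then, after normalizing by the rescaling used in Bonahon--Dreyer's compactification, some length $\ell^H_{\eta_m}(\gamma)$ diverges. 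In the cone coordinates this is visible as follows: the coordinates either diverge, or approach a face where a strict inequality, expressed as a positive intersection or length of a lamination leaf, degenerates. In the latter case some length tends to infinity or the representation degenerates.
\end{itemize}

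If a clean citation is unavailable, I would try the first route. The closed set $\Phi(K_\rho)$ is then contained in a compact subset of $\mathcal{C}_n(\lambda)$, and hence is compact.

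Combining these steps gives Lemma \ref{lem:setK}.
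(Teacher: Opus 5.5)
Your proposal is correct and follows the paper's proof essentially step for step. Invariance comes from property (b), and convexity from writing $K_\rho$ as an intersection of sublevel sets of the convex functions $L_\gamma$ given by Theorem \ref{thm:convex}. Compactness comes from bounding the Hilbert lengths of a finite filling collection containing a pants decomposition by $B=\max_i \ell^H_\rho(\gamma_i)$ and then using properness of the resulting length map; the paper cites this properness from Lee--Zhang \cite[Corollary 1.5]{LZ}. That Lee--Zhang result is the reference you want, rather than Labourie's energy-functional properness, which concerns a different map. Your coordinate-based alternative in the third bullet is not needed and is not rigorous as stated. Your explicit justification of closedness via continuity of $L_\gamma$ fills in a point the paper leaves implicit.
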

\begin{proof}
    The invariance under $\sigma$ follows immediately from the invariance of Hilbert lengths in property (b) noted above. 
    For any $\gamma \in \Gamma$, consider the subset $$K_\rho(\gamma) =   \{ \eta \in \text{Hit}_n(S)\ \vert \ell^H_\eta(\gamma) \leq \ell^H_\rho(\gamma) \}$$
    Since  $L_\gamma$ is a convex function by Theorem \ref{thm:convex}, this sub-level set is a convex set, and so is the intersection $$K_\rho = \bigcap\limits_{\gamma} K_\rho(\gamma).$$ 
    The compactness of $K_\rho$ follows from \cite[Corollary 1.5]{LZ} that shows the properness of the map $$\eta \mapsto (\ell^H_\eta(\gamma_1), \ell^H_\eta(\gamma_2), \ldots, \ell^H_\eta(\gamma_k)) $$
    where $\{\gamma_1,\gamma_2,\ldots, \gamma_k\}$ is a filling collection of curves containing a pants decomposition. Indeed, if we set $B = \max\limits_{1\leq i\leq k} \ell^H_\rho(\gamma_i)$, we have that $\eta \in K_\rho \implies  \ell^H_\eta(\gamma_i) \leq B$ for each $1\leq i\leq k$, and hence $K_\rho$ is a (closed) subset of the preimage of the compact set $[0,B]^k$. 
\end{proof}

\medskip

\noindent We can now complete:

\begin{proof}[Proof of Theorem \ref{thm:main}] 

Since $K_\rho$ is a compact convex set in Euclidean space by Lemma \ref{lem:setK}, nonempty as it contains $\rho$, and the contragredient involution $\sigma:K_\rho \to K_\rho$  is continuous, the Brouwer fixed-point theorem applies, and there is a fixed point $j\in K_\rho$ such that $\sigma(j) =j$. By property (a) of this involution stated above, this fixed-point $j$ corresponds to a representation in $\text{Hit}_n(S)^\prime$. By the definition of $K_\rho$ (see \eqref{eq:Kx0}) we have that $\ell^H_j(\gamma) \leq \ell^H_\rho(\gamma)$ for all $\gamma \in \Gamma$, as desired. \end{proof}

\section{Proof of Theorem \ref{thm:transl}}

We recall from \cite[Definition 1.1]{BGup2} the definition of the $k$-th \textit{fundamental weight length}:
\begin{equation}\label{eq:fwt}
\ell^{\omega_k}_\rho(\gamma) = \sum\limits_{i=0}^{k-1}\log\lambda_{n-i}
\end{equation} 
where $\gamma \in \Gamma$ and $\rho\in \text{Hit}_n(S)$, and as before $0<\lambda_1< \dots <\lambda_n$ are the eigenvalues of a determinant-one representative of $\rho(\gamma)$. The convexity of the associated length function, for each $1\leq k<n$, will be our first intermediate result. Note that since \cite{BGup2} concerned representations to $\pslnc$, the eigenvalues were possibly complex, which necessitated taking their moduli; we do not need this in our current setting for $\rho\in \text{Hit}_n(S)$. 

\medskip
For this, we also recall from \cite[Definition 2.6]{BGup2} the notion of a weighted planar network: a planar acyclic directed graph with $n$ sources and $n$ sinks, and a weight associated with each edge that is (in our case) a positive real number. Such a planar network also has an associated $n\times n$ weight matrix, where the $ij$-th entry is the sum of the weights of paths from source-$i$ to sink-$j$ and the weight of a path is the product of the weights along the edges. (For an example, see \cite[Example 5]{BGup}.) 

The key fact about weighted planar networks and their weight matrices, that we shall need in this section, is Lindstr\"{o}m's lemma which expresses minors of the weight matrix also as sums of families of paths in the planar network -- see \cite[Lemma 2.7]{BGup2} for an exact statement, and \cite{Lind} for the exact reference. 

\medskip

\noindent The following is a consequence of the work in \cite[\S3.2]{BGup} and \cite[\S3]{BGup2}:

\begin{lem}\label{lem:anwt} For $v\in \mathcal{C}_n(\lambda)$ and any non-trivial $\gamma\in \Gamma$ such that its axis is not a leaf of $\tilde{\lambda}$ the approximating matrix $A_N(v)$ for $\rho_v(\gamma)$ as in Lemma \ref{lem:form} is the weight matrix of a weighted planar network, where each weight is a  positive generalized Laurent monomial in the triangle and edge parameters of $\rho$. 
\end{lem}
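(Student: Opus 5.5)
The plan is to show that each factor in the decomposition \eqref{eq:anapp} of $A_N(v)$ is the weight matrix of a planar network with positive generalized Laurent monomial weights. Then we use the fact that concatenating networks realizes matrix multiplication. Concretely, if $\Gamma_1$ and $\Gamma_2$ are planar acyclic networks with $n$ sources and $n$ sinks and weight matrices $W_1$ and $W_2$, we identify the sinks of $\Gamma_1$ with the sources of $\Gamma_2$. The result is again planar and acyclic, and its weight matrix is $W_1W_2$: a path from source-$i$ to sink-$j$ factors uniquely through one intermediate vertex, and its weight is the product of the two partial weights. So it suffices to treat the individual factors in the expressions obtained in cases (i) and (ii) of the proof of Lemma \ref{lem:form}.

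The factors in case (i) are $M$, $J$, $D_\epsilon^{-1}S_ND_\epsilon$, $D$ and $J$. In case (ii) they are $S^{-1}M^{-1}S$, $J$, $D_\epsilon^{-1}S_ND_\epsilon$, $D$ and $J$.

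1. The diagonal matrix $D(e)$ is realized by $n$ parallel horizontal edges. The weights are its diagonal entries, which are positive generalized Laurent monomials in the edge parameters by \cite[(3.8)]{BGup}.

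2. The factor $J$ is not a planar network on its own, because it reverses the order of the sources. The way around this is that the reversal can be absorbed by relabelling. The two $J$'s always occur in the patterns $MJ\cdots DJ$ or $(\cdots)J(\cdots)DJ$. Conjugating a network's weight matrix by $J$ corresponds to reflecting the planar picture upside down, which again gives a planar network with the same weights. So I would group the factors as $M\cdot\bigl(J(D_\epsilon^{-1}S_ND_\epsilon)DJ\bigr)$, where the bracketed term is the reflected network of $(D_\epsilon^{-1}S_ND_\epsilon)D$. Case (ii) is handled in the same way.

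3. For $M(t)$ and $S^{-1}M^{-1}S$, I would invoke the explicit planar networks constructed in \cite[\S3.2]{BGup}. This is where the positivity of the entries came from: $M$ is built from the Fock--Goncharov ``snake'' moves, and each move is an elementary bidiagonal matrix with positive monomial weights.

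4. By \eqref{eq:sn}, $D_\epsilon^{-1}S_ND_\epsilon$ is a product of the matrices $D_\epsilon^{-1}L_\rho(P)D_\epsilon$. By the proof of Lemma \ref{lem:form0}, each of these is a diagonal conjugate $Q_\pm$ of $\widehat M$ or $J\widehat MJ$. Here $\widehat M$ is realized by a planar network in \cite[Lemma 3.12]{BGup2}, $J\widehat MJ$ by its reflection, and $Q_\pm$ and $Q_\pm^{-1}$ by diagonal networks whose weights are Laurent monomials in the edge invariants, which are positive. Concatenating gives the network for $S_N$.

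The main obstacle is step 3, together with the precise bookkeeping for $J$. One has to check that the networks from \cite{BGup} for $M$ and for $S^{-1}M^{-1}S$ really have positive monomial weights after the determinant-one normalization, which introduces rational powers; this is why generalized Laurent monomials are needed. One also has to check that the sign in case (i) for even $n$, which was absorbed into $A_N$, is consistent with these networks. I would handle the normalization by putting the scalar factor $\prod t^{-a/n}$ (or the analogous product for edges) as an overall diagonal layer, since a positive scalar is itself a positive generalized monomial.
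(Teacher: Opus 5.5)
Your proposal is correct and follows essentially the same route as the paper. Both arguments concatenate planar networks along the factorizations $A_N = MJ(D_\epsilon^{-1}S_ND_\epsilon)DJ$ and $A_N=(S^{-1}M^{-1}S)J(D_\epsilon^{-1}S_ND_\epsilon)DJ$ from the proof of Lemma \ref{lem:form}, realize $D$ by $n$ parallel edges, absorb the two $J$'s by treating $J(D_\epsilon^{-1}S_ND_\epsilon)DJ$ as a reflected network (the paper cites \cite[Lemma 3.11 (4)]{BGup2} where you justify this directly), and rely on \cite{BGup} for $M$ and $S^{-1}M^{-1}S$ and on \cite{BGup2} for the $Q_\pm$-conjugates of $\widehat{M}$ and $J\widehat{M}J$ that make up $D_\epsilon^{-1}S_ND_\epsilon$. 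The even-$n$ sign issue you flag is already settled by the paper's definition $A_N=(-1)^{n-1}TS_NE$.
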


\begin{proof} 
    The idea is to use the decomposition \eqref{eq:anapp} and the fact that the weight matrix of the concatenation of two weighted planar networks is the product of their respective weight matrices. 

    We begin with the observation that by \cite[Lemma 3.13]{BGup2}, for each factor $L_\rho(P)$ in \eqref{eq:approx}, the matrix $D_\epsilon^{-1}L_\rho(P)D_\epsilon$ is a conjugate of $J\widehat{M}J$ or $\widehat{M}$ by the diagonal matrices $Q_+$ or $Q_-$. 
    
    \noindent As in case (i) of the proof of Lemma \ref{lem:form}, if $\delta =+1$ in \ref{eq:anapp}, we can write $$A_N = MJ(D_\epsilon^{-1}S_ND_\epsilon)DJ.$$
    Now by \eqref{eq:sn} and \cite[Lemma 3.13]{BGup2} we have that the factor $(D_\epsilon^{-1}S_ND_\epsilon)$ is the weight  matrix of a planar network with weights of the desired form. Now $D$ is a diagonal matrix with weights that are generalized Laurent monomials in edge parameters (see \cite[(3.8)]{BGup}); this is the weight matrix of a planar network with exactly $n$ edges, from source-$i$ to sink-$i$ for each $1\leq i\leq n$, with the diagonal entries as the edge-weights. The same property is true for the conjugate  $J(D_\epsilon^{-1}S_ND_\epsilon)DJ$  by  \cite[Lemma 3.11 (4)]{BGup2}, and finally for the remaining factor $M$ by \cite[Lemma 3.4]{BGup}.

    \noindent If $\delta =-1$ in \ref{eq:anapp}, as in case (ii) of the proof of Lemma \ref{lem:form}, we have $$A_N = (S^{-1}M^{-1}S)J (\De^{-1} S_N \De)DJ  .$$

    The first factor on the right $(S^{-1}M^{-1}S)$ is the weight matrix of a planar network with weights of the desired form by the work in \cite[\S3.2]{BGup} -- see the proof of \cite[Proposition 3.10]{BGup}. The remaining product is exactly the same as in the discussion above, and we are done.  
\end{proof}

\begin{cor}\label{cor:anwt} For $v, \gamma$ as in the Lemma above, the entries of the $k$-th exterior power  $\bigwedge^kA_N(v)$ are finite sums of positive generalized Laurent monomials in the triangle and edge parameters of $\rho$. In particular, each fundamental weight length functional $$L^{\omega_k}_\gamma: \mathcal{C}_n(\lambda) \to \mathbb{R}_{\geq 0}$$ defined by $L^{\omega_k}_\gamma(v) = \ell^{\omega_k}_{\Phi^{-1}(v)}(\gamma)$ is convex, for any $\gamma \in \Gamma$. 
\end{cor}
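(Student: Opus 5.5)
The plan has three parts: establish positivity of the minors of $A_N(v)$, derive convexity of $L^{\omega_k}_\gamma$ by the spectral-radius argument used for Theorem \ref{thm:convex}, and treat curves carried by $\lambda$ separately.

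\textbf{Positivity of minors.} By Lemma \ref{lem:anwt}, $A_N(v)$ is the weight matrix of a planar network $G_N$ whose edge weights are positive generalized Laurent monomials in the triangle and edge parameters. The combinatorics of $G_N$ depend only on the intersection pattern of $\gamma$ with the plaques, not on $v$. Fix $k$-subsets $I,J\subset\{1,\ldots,n\}$. Lindstr\"{o}m's lemma \cite[Lemma 2.7]{BGup2} expresses the minor $\det A_N(v)_{I,J}$ as a sum over families of $k$ pairwise vertex-disjoint paths from the sources $I$ to the sinks $J$. Each summand is the product of the edge weights along the family. Planarity and acyclicity mean no sign appears: only the identity pairing survives, with positive sign. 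A product of positive generalized Laurent monomials is again one, so every entry of $\bigwedge^k A_N(v)$ is a finite (possibly empty) sum of positive generalized Laurent monomials.

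\textbf{Convexity of $L^{\omega_k}_\gamma$.} First suppose the axis of $\gamma$ is not a leaf of $\tilde\lambda$. The eigenvalues of $\bigwedge^k A(v)$ are the $k$-fold products of the $\lambda_i$. Since these are positive and distinct, the spectral radius is $\lambda_n\lambda_{n-1}\cdots\lambda_{n-k+1}>0$, so
\[
\log\varrho\bigl(\textstyle\bigwedge^k A(v)\bigr)=L^{\omega_k}_\gamma(v).
\]
For fixed $N$, the function $v\mapsto \log\varrho(\bigwedge^k A_N(v))$ is convex by Corollary \ref{cor:convex}. If the spectral-radius hypothesis there fails at some points, I would avoid it by rerunning its proof directly. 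The $l_1$-norm of $(\bigwedge^k A_N(v))^k$ is a sum of positive Laurent monomials, so its logarithm is convex by Lemma \ref{lem:convex}, and Gelfand's formula passes to the limit.

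Next I let $N\to\infty$. The map $\bigwedge^k$ is continuous and $A_N(v)\to A(v)$, so $\bigwedge^kA_N(v)\to\bigwedge^kA(v)$. Continuity of the spectral radius then gives convergence pointwise in $v$. A pointwise limit of convex functions is convex, which settles this case. It is worth checking that the sign normalization of $A_N$ from case (i) of Lemma \ref{lem:form} is harmless, since the spectral radius is insensitive to the sign.

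\textbf{Curves carried by $\lambda$.} If the axis of $\gamma$ is a leaf, i.e. $\gamma$ is a closed leaf of $\lambda$ or a power of one, I would again use \cite[Theorem 7.5]{BD17}. Each $\ell^i_{\rho_v}(\gamma)=\log(\lambda_{i+1}/\lambda_i)$ is linear in $v$. Also $\sum_i\log\lambda_i=0$, so the $\log\lambda_i$ are fixed linear combinations of these gaps. Hence $\ell^{\omega_k}$ is a linear combination of the $\ell^i$ with constant coefficients, given by the inverse Cartan matrix, and is therefore linear in $v$.

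\textbf{Main obstacle.} I expect the delicate point to be the sign-free application of Lindstr\"{o}m's lemma. It requires $A_N(v)$ to be literally the weight matrix of a planar network with the sources and sinks in matching order. The conjugations by $J$ in Lemma \ref{lem:anwt} reverse orders, so I would check via \cite[Lemma 3.11(4)]{BGup2} that each $J\cdot J$ sandwich again yields a genuine planar network. Only then can the minors be taken to have positive expansions.
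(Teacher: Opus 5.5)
Your proposal is correct and follows essentially the same route as the paper: Lindstr\"{o}m's lemma applied to the planar network of Lemma \ref{lem:anwt} for positivity of the minors, then Corollary \ref{cor:convex} for $\bigwedge^k A_N(v)$ with the limit $N\to\infty$, and \cite[Theorem 7.5]{BD17} with the determinant-one relation for closed leaves of $\lambda$. Your fallback for the spectral-radius hypothesis is unnecessary. Since $\det A_N(v)=\pm 1$, the matrix $\bigwedge^k A_N(v)$ is invertible with determinant $\pm1$, so $\varrho(\bigwedge^k A_N(v))\geq 1$ automatically.
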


\begin{proof}
    Consider first the case when $\gamma \in \Gamma$ is not a closed leaf of $\lambda$ (or a power). Then Lemma \ref{lem:anwt} applies and  $A_N(v)$ is the weight matrix of a weighted planar network with weights that are positive generalized Laurent monomials in the triangle and edge parameters of $\rho_v = \Phi^{-1}(v)$. By Lindstr\"{o}m's lemma, any $k\times k$ minor of $A_N(v)$  is the sum of weights of certain path-families in this planar network, and is hence a finite sum of such monomials.  Such minors are exactly the entries of the exterior power $\bigwedge^k A_N(v)$. 
    
    \noindent  By Corollary \ref{cor:convex}, the logarithm of the spectral radius of this $k$-th exterior power of $A_N(v)$ is convex as a function of $v$.  By continuity, since $A_N(v) \to A(v)$ as $N\to \infty$ (see Corollary \ref{cor:form}), the limit, which is
      $$\log \varrho\left(\bigwedge^k A(v)\right) = \log \lambda_{n-k+1}(v) \cdots \lambda_{n-1}(v)\lambda_n(v),$$
      the product of the $k$ largest eigenvalues of $A(v)$, is also convex. This is exactly the $k$-th fundamental weight length functional  $L^{\omega_k}_\gamma$ -- see \eqref{eq:fwt}. 

In the case that $\gamma \in \Gamma$ is a power of a closed component of $\lambda$, then as observed in the final part of the proof of Theorem \ref{thm:convex}, the $i$-th length function $\log (\lambda_{i+1}/ \lambda_i)$ is linear by \cite[Theorem 7.5]{BD17}. The determinant-one normalization implies that $\log\lambda_1 + \log\lambda_2 + \cdots + \log\lambda_{n}=0$; using this one can express each fundamental weight length as a linear combination of these length functions, and the corresponding length functional $L^{\omega_k}_\gamma$ is linear.
\end{proof}

\noindent We can now complete the proof of Theorem \ref{thm:transl}:

\begin{proof}[Proof of Theorem \ref{thm:transl}]
First, the convexity of the translational length functional $L^X_\gamma$ uses standard majorization inequalities as in \cite[proof of Proposition 3.6]{BGup2}, see also \cite[\S4.2]{BGup}, as we shall now describe. 

\noindent In what follows, for any $v\in \mathcal{C}_n(\lambda)$ let $$a(v) = (\log\lambda_1(v), \log\lambda_2(v),\ldots, \log\lambda_n(v))$$
where $0<\lambda_1(v)<\cdots< \lambda_n(v)$ are the eigenvalues of a determinant-one representative of $\rho_v(\gamma)$; as before $\rho_v = \Phi^{-1}(v) \in \text{Hit}_n(S)$ is the Hitchin representation corresponding to the point $v$. 

Let $x,y\in \mathcal{C}_n(\lambda)$, $t\in (0,1)$ and $\gamma \in \Gamma$; then the convexity of the fundamental weight length functions in Corollary \ref{cor:anwt} implies $$a(tx+ (1-t)y) \prec ta(x) + (1-t)a(y)$$
where the majorization of two $n$-tuples $(a_1,a_2,\ldots,a_n) \prec (b_1,b_2,\ldots, b_n)$ means $$\sum\limits_{i=1}^k a_{n-i+1} \leq \sum\limits_{i=1}^k b_{n-i+1} \text{ for each }1\leq k<n \text{ and }\sum\limits_{i=1}^n a_{i} = \sum\limits_{i=1}^n b_{i}.$$

If $\lVert (a_1,a_2, \ldots, a_n) \rVert_2: = \sqrt{a_1^2+a_2^2+\cdots +a_n^2}$ is the usual convex $l^2$-norm, an application of Karamata's inequality (see \cite[Proposition 3.C.1]{MOA}) then implies the first inequality here: 
$$\lVert a(tx +(1-t)y)\rVert_2 \leq \lVert ta(x) + (1-t) a(y)\rVert_2 \leq t\lVert a(x)\rVert_2 + (1-t) \lVert a(y)\rVert_2$$
where the second is the usual triangle inequality.
Multiplying both sides by $\sqrt{2}$, and using \eqref{eq:transl} we conclude that 
$$L^X_\gamma(tx+ (1-t)y) \leq tL^X_\gamma(x) + (1-t)L^X_\gamma(y) $$
which is the desired convexity.

\smallskip

Now given a $\rho \in \text{Hit}_n(S)$, the proof of the subdominating representation $j$  (in the translation length spectrum) follows the argument in \S4: first, the subset 
$$K^X_\rho = \{ \eta \in \text{Hit}_n(S)\ \vert \ell^X_\eta(\gamma) \leq \ell^X_\rho(\gamma) \text{ for all } \gamma \in \Gamma\} $$
is a convex subset of the Hitchin cone $\mathcal{C}_n(\lambda)$ by the convexity of each $L^X_\gamma$ proved above, and is non-empty as it contains $\rho$. 
Moreover, since 
$$(\ell^H_\eta(\gamma))^2 =  (\log\lambda_n - \log \lambda_1)^2 \leq  2((\log\lambda_n)^2 + (\log\lambda_{n-1})^2 + \cdots +(\log\lambda_1)^2 ) = (\ell^X_\eta(\gamma))^2 $$
it follows that the properness argument as in the proof of Lemma \ref{lem:setK} implies that $K_\rho^X$ is compact. 

The contragredient involution \eqref{eq:inv} does not affect translation lengths, namely $\ell^X_\rho(\gamma) =\ell^X_{\rho^\ast}(\gamma)$ -- see property (b) after \eqref{eq:inv}. Hence we have $\sigma(K^X_\rho) \subset K^X_\rho$, and by the Brouwer fixed-point theorem there is a fixed point $\sigma(j) =j$ for some $j\in \text{Hit}_n(S)^\prime$ (see property (a) in \S4). From the definition of $K^X_\rho$ above, it follows that $\ell^X_j(\gamma) \leq \ell^X_\rho(\gamma)$ as desired.  Note that we can in fact require that the same $j$ is also subdominating in the Hilbert-length spectrum, since we can apply the above fixed-point argument to the restriction of $\sigma$ to the intersection $K_\rho\cap K^X_\rho$ of the two compact convex sets. 
\end{proof}

\section{Positive representations into $\psltr$}

In this section, we consider $S$ to be a surface with punctures, and we consider positive framed representations  $\rho:\pi_1(S) \to \psltr$ where the framing is a choice of a flag in $\mathcal{F}(\mathbb{R}^3)$ at each puncture that is invariant under the peripheral holonomy. Such a framed representation admits Fock-Goncharov coordinates with respect to a choice of an ideal triangulation (namely, one triangle parameter associated with each triangle and two edge parameters associated with each edge, as mentioned in \S2.1). The underlying representation being positive corresponds exactly to these coordinates being positive (see, for example, \cite[Theorem 2.5]{FG07}); they arise as the holonomies of marked convex projective structures on $S$ (see \cite[Theorem 2.7]{FG07}, or \cite[Chapter 4]{CTT}). If $ \mathcal{T}_3^+(S)$ is the space of positive framed representations of $\pi_1(S) \to \psltr$ up to conjugation,  one can define, analogous to \eqref{eq:bd}, a parametrization 
\begin{equation}\label{eq:psi}
\Psi:\mathcal{T}_3^+(S) \to \mathbb{R}^{T+ 2E} 
\end{equation} 
where $T$ (resp. $E$) is the number of triangles (resp. edges) in the ideal triangulation, that maps $\rho$ to the \textit{logarithms} of the Fock-Goncharov parameters. See, for example, \cite[Theorem 2.5]{FG07} or \cite[Theorem 4.4.1]{CTT} for the ``exponentiated" version of this parametrization. 

\medskip

We shall prove the following analogue of Tholozan's subdomination result in this setting; this was raised as a possibility at the conclusion of \cite[\S1]{BGup}. Although the proof shall follow that of Theorem \ref{thm:main}, one advantage is that the construction of the subdominating Fuchsian representation is explicit, and does not depend on a fixed-point theorem.  In the following statement, a \textit{cusped} representation $\rho$ is one where $\rho(\gamma)$ is regular unipotent (i.e., all eigenvalues $1$ and with a single Jordan block) for every peripheral curve $\gamma$ (\textit{cf.} \cite[Theorem 1.4 (2)]{CZZ}). Also, if $\iota:\pslr\to\psltr$ is the standard irreducible embedding, we shall use $j$  and  $\iota\circ j$ interchangeably.

\begin{prop}\label{prop:n3} Let $S$ be a surface of genus $g$ and $n\ge 1$ punctures  such that $2g-2+n >0$.
Given a positive representation $\rho:\pi_1(S) \to \psltr$, there exists a Fuchsian representation $j:\pi_1(S) \to \pslr$ such that $\ell_j^H(\gamma) \leq \ell^H_\rho(\gamma)$
for all $\gamma \in \pi_1(S)$. Moreover, equality holds for any peripheral curve $\gamma$; in particular, if $\rho$ is cusped, then $j$ is the holonomy of a complete finite-area hyperbolic metric on $S$. 
\end{prop}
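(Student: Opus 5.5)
The plan is to follow the strategy of Theorem \ref{thm:main}, but to replace the Brouwer fixed point by an explicit averaging in the coordinates $\Psi$ of \eqref{eq:psi}. We fix an ideal triangulation of $S$. For a positive framed $\rho$, write $\Psi(\rho)=(\log t_\Delta;\ \log e^1_E,\log e^2_E)$ over the triangles $\Delta$ and edges $E$. The contragredient involution acts on Fock--Goncharov coordinates for $n=3$ by $t_\Delta\mapsto t_\Delta^{-1}$ and by swapping the two edge parameters $e^1_E\leftrightarrow e^2_E$; this is the duality that exchanges flags and their dual flags, and the framing is dualized as well. In the coordinates $\Psi$ this is the linear map $\sigma(\theta_\Delta;\alpha^1_E,\alpha^2_E)=(-\theta_\Delta;\alpha^2_E,\alpha^1_E)$. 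Its fixed set is $\{\theta_\Delta=0,\ \alpha^1_E=\alpha^2_E\}$, which by Fock--Goncharov is exactly the image of the framed Fuchsian representations $\iota\circ j$.

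We then define $j$ by $\Psi(j)=\tfrac12\bigl(\Psi(\rho)+\sigma\Psi(\rho)\bigr)$. Concretely, all triangle parameters of $j$ equal $1$, and each edge parameter is the geometric mean $\sqrt{e^1_Ee^2_E}$. The domain $\mathbb{R}^{T+2E}$ is all of Euclidean space, so this point lies in the image and defines a positive framed representation fixed by $\sigma$, hence a Fuchsian one.

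Next we need convexity of $L_\gamma=\ell^H_{\Psi^{-1}(\cdot)}(\gamma)$ on $\mathbb{R}^{T+2E}$. For a nonperipheral $\gamma$ this is exactly the finite version of Theorem \ref{thm:convex}. The monodromy \eqref{eq:monfor} is a finite product of building blocks $T^{\pm1}E$. After the sign normalization, their entries are finite sums of positive generalized Laurent monomials in $\exp$ of linear functionals, so no limit in $N$ is needed. Corollary \ref{cor:convex}, applied to $\gamma$ and to $\gamma^{-1}$, makes $\log\lambda_3$ and $-\log\lambda_1$ convex, and hence $L_\gamma$ is convex. Since $\ell^H$ is $\sigma$-invariant by property (b), we obtain
\[
\ell^H_j(\gamma)=L_\gamma\bigl(\tfrac12 x+\tfrac12\sigma x\bigr)\le \tfrac12 L_\gamma(x)+\tfrac12 L_\gamma(\sigma x)=\ell^H_\rho(\gamma).
\]

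For a peripheral $\gamma$ we must instead prove equality. The eigenvalues of the peripheral holonomy of a framed positive representation are explicit monomials in the edge and triangle parameters around the puncture. In the standard formula, the logs of the eigenvalue ratios are the sums over edges incident to the puncture of $\alpha^1_E$, respectively $\alpha^2_E$, plus triangle terms, which may or may not cancel. So the Hilbert length $\log(\lambda_3/\lambda_1)$ should be a linear function on $\mathbb{R}^{T+2E}$, namely the sum over incident edge ends of $\alpha^1_E+\alpha^2_E$ (possibly up to sign conventions). This expression is $\sigma$-invariant and linear, so it is preserved exactly by the averaging, giving $\ell^H_j(\gamma)=\ell^H_\rho(\gamma)$.

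If $\rho$ is cusped, its peripheral Hilbert lengths vanish, so $j$ has parabolic peripheral holonomy, and a Fuchsian representation with parabolic boundary holonomy is the holonomy of a complete finite-area metric. One further check is needed: the framed Fuchsian $j$ must be an honest discrete faithful representation into $\pslr$. This holds because the Fock--Goncharov coordinates with unit triangle invariants and equal edge invariants reproduce Thurston's shear coordinates.

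The main obstacle will be the peripheral computation, together with the exact form of $\sigma$ in the chosen conventions. Specifically, we must verify that the triangle parameters drop out of $\log(\lambda_3/\lambda_1)$, or at least enter $\sigma$-invariantly, and that the framing flag corresponds to the correct ordering of eigenvalues. This ordering matters because the framing flag could pick out the middle eigenvalue, which would break the simple formula. We will first try the explicit $3\times3$ product around a puncture in the snake basis adapted to the framing flag, where the holonomy is upper triangular with diagonal given by these monomials. A secondary, nonessential worry is whether the sign normalization in case (i) of Lemma \ref{lem:form} goes through for finite triangulations; it does, using \eqref{eq:te} directly.
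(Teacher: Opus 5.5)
\textbf{The gap is in your formula for the contragredient involution.} The triangle parameters are indeed inverted, but the two edge parameters are not simply swapped. By \eqref{eq:inv4} (Fock--Goncharov), consider an edge whose adjacent triangles have parameters $X,Y$ and whose edge parameters are $Z,W$. The dual representation then has
\[
Z^\prime = \frac{W(1+Y)}{Y(1+X)}, \qquad W^\prime = \frac{Z(1+X)}{X(1+Y)}.
\]
The reason is that dualizing turns a cross-ratio of lines through the point of a flag into a cross-ratio of points on its line. Rewriting the fourth flag's line in terms of its point and the triple ratio produces the non-monomial factors $1+X$ and $1+Y$.

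Hence $\sigma$ is not linear in the coordinates $\Psi$, and your midpoint $\tfrac12(x+\sigma x)$ is not $\sigma$-fixed. It does have all triangle parameters equal to $1$. But its two edge parameters have ratio $\frac{(1+Y)\sqrt{X}}{(1+X)\sqrt{Y}}$, which differs from $1$ unless $X=Y$ or $XY=1$. So it is not Fuchsian in general.

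If you instead define $j$ directly by unit triangle parameters and edge parameters $\sqrt{ZW}$, it is Fuchsian. But it is then the midpoint of $x$ and a point that is not $\Psi(\rho^\ast)$. That point has no reason to share the Hilbert length spectrum of $\rho$, since property (b) applies only to the actual $\rho^\ast$, so convexity yields nothing. In fact the correct construction below produces common edge parameter $\sqrt{ZW}\,(XY)^{-1/4}$, which is a genuinely different Fuchsian point from yours.

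\textbf{The missing idea is to average twice, as the paper does.}
\begin{enumerate}
\item Let $\rho_1$ be the midpoint of $\Psi(\rho)$ and $\Psi(\rho^\ast)$. Since $X\mapsto 1/X$, all triangle parameters of $\rho_1$ equal $1$. Convexity together with property (b) gives $\ell^H_{\rho_1}\le \ell^H_\rho$.
\item At a point where all triangle parameters are $1$, \eqref{eq:inv4} reduces to $Z^\prime=W$ and $W^\prime=Z$. So there $\sigma$ really is your swap. The midpoint $j$ of $\Psi(\rho_1)$ and $\Psi(\rho_1^\ast)$ is therefore Fuchsian, and convexity again gives $\ell^H_j\le\ell^H_{\rho_1}$.
\end{enumerate}

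The peripheral equality must then be checked at both steps, and the justification must change. It cannot be that the peripheral length is a ``$\sigma$-invariant linear form'', because $\sigma$ is not linear. The correct argument runs as follows. The peripheral monodromy is conjugate to a triangular product of blocks \eqref{eq:2} or \eqref{eq:3}. With the ordering of diagonal entries fixed by the framing, its Hilbert length is therefore an affine function of the log-coordinates. By property (b), this function takes equal values at $x$ and at $\sigma x$, so it is unchanged at their midpoint.

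Triangle parameters do enter this affine function: the diagonal of \eqref{eq:2} is $(xy,y,z)$ up to the scalar. So your guessed edge-only formula is not correct as stated, though it is also not needed.
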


(Here the Hilbert length $\ell^H_j$ for a Fuchsian representation $j$ is twice the usual hyperbolic length of $\gamma$, or translation length of $j(\gamma)$.)

\begin{proof} 
In the punctured surface setting, we need the monodromy formula \eqref{eq:monfor} for the Fock-Goncharov coordinates, and do not need subsequent discussion in \S2.2 regarding the slithering map approximants. 
By \cite[(3.2) and (3.3)]{BGup} the ``building blocks"  have the form
\begin{equation}
	 T(z)E(x,y)= \sqrt[3]{\frac{1}{xy^2z}}
	 \begin{pmatrix}
	 x y &0 & 0 \\
	 x y & y & 0 \\
	 x y & y(1+z) & z
	 \end{pmatrix}\label{eq:2}
 \end{equation}
 \begin{equation}
	 T(z)^{-1}E(x,y)=\sqrt[3]{\frac{z}{xy^2}} 
	 \begin{pmatrix}
	 \frac{x y}{z} & \frac{y(1+z)}{z} & 1 \\
	 0 & y & 1 \\
	 0 & 0 & 1
	 \end{pmatrix} \label{eq:3}
 \end{equation}
where $x,y$ are the associated edge parameters and $z$ is the triangle parameter; note that the entries are indeed finite sums of generalized Laurent monomials in the sense of Definition \ref{defn:Lau}. In particular, by the proof of Theorem \ref{thm:convex}, the Hilbert length function $L_\gamma$ is convex as a function of the logarithms of the Fock-Goncharov coordinates. 

\smallskip

In \cite{FG07} Fock-Goncharov provide a formula for the involution $\sigma$: namely, for two adjacent triangles with triangle parameters $X,Y$ respectively, and edge parameters $Z,W$ for the common edge, where the parameters are for a (conjugacy class) of a positive representation $[\eta]$, then the corresponding parameters for  $[\eta^\ast]$ are:
\begin{align} 
X^\prime = 1/X\  &\  Y^\prime = 1/Y \label{eq:inv3}\\
Z^\prime = \frac{W(1+Y)}{Y(1+X)}\ &\  W^\prime = \frac{Z(1+X)}{X(1+Y)}. \label{eq:inv4}
\end{align}
(See \cite[Figure 5]{FG07}.)
The fixed-point set of $\sigma$ comprises (positively framed) Fuchsian representations,  where all the triangle parameters are $1$ and the two edge-parameters for any edge are equal (see also \cite[Lemma 6.2.1]{CTT}).

\smallskip
One can define a convex set $K_\rho$ as in \eqref{eq:Kx0}, that is invariant under $\sigma$. However, in this case, one can determine the fixed-point $j$ more explicitly, in two steps:
\begin{itemize}
    \item[(1)] In the first step, define $\rho_1$ to be the representation whose Fock-Goncharov coordinates are the \textit{geometric mean} of the corresponding coordinates of $[\rho]$ and $[\rho^\ast]$. 
    
 \item[(2)] Then, we define $j$ to be the representation whose Fock-Goncharov coordinates are the \textit{geometric mean} of the corresponding coordinates of $[\rho_1]$ and $[\rho_1^\ast]$. 
    
\end{itemize}

Note that  by \eqref{eq:inv3} the triangle parameters of $\rho_1$ obtained in step (1) are all equal to $1$. By \eqref{eq:inv4} this implies that for $[\rho_1]$, the edge parameters $(Z,W)$ of each edge get swapped by the involution $\sigma$. Hence after step (2), the two edge parameters of each edge are equal to each other, and $j$ lies in the Fuchsian locus.

\smallskip 
Observe that taking the geometric mean of the Fock-Goncharov coordinates of $[\rho]$ and $[\rho^\ast]$ corresponds to taking the midpoint of the straight-line segment between them, in $\mathbb{R}^{T+2E}$ defined by the \textit{logarithms} of these parameters, as in \eqref{eq:psi}.  The convexity of the Hilbert length function $L_\gamma$ then implies that $$\ell^H_{\rho_1}(\gamma)\leq \frac{1}{2} ( \ell^H_{\rho}(\gamma) + \ell^H_{\rho^\ast}(\gamma)) =   \ell^H_{\rho}(\gamma) $$
for all $\gamma \in \pi_1(S)$.
Similarly, $[j]$ corresponds to the midpoint of the line segment between $[\rho_1]$ and $[\rho_1^\ast]$ and the convexity yields 
$$\ell^H_{j}(\gamma)\leq \frac{1}{2} ( \ell^H_{\rho_1}(\gamma) + \ell^H_{\rho_1^\ast}(\gamma)) =   \ell^H_{\rho_1}(\gamma).$$
Combining these we obtain the desired subdomination $\ell^H_{j}(\gamma)\leq \ell^H_{\rho}(\gamma)$ for all $\gamma \in \pi_1(S)$.

\smallskip

For the final statement about the peripheral monodromy, note that as mentioned in the proof of \cite[Proposition 3.3]{BGup}, by \cite[Theorem 5.2.4]{CTT} any such peripheral monodromy is conjugate to a  product of building blocks as in \eqref{eq:monfor} with each $\delta_i=1$ (in which case the resulting product is lower-triangular -- see \eqref{eq:2})  or each $\delta_i=-1$ (in which case it is upper-triangular -- see \eqref{eq:3}). The eigenvalues of the monodromy matrix are then precisely the diagonal elements; by \cite[Theorem 4.5.1]{CTT} we can assume that we have fixed the positive framing (and a peripheral generator) at each puncture such that the diagonal elements are in non-decreasing order, and by our determinant-one normalization they have product equal to $1$.  Now each diagonal element is the product of the corresponding diagonal elements of each of the triangular building blocks, and hence the Hilbert length of the peripheral element is defined by linear equations in the logarithms of the parameters. Since these lengths do not change under a contragredient involution by property (b), the midpoints in steps (1) and (2) above also have the same peripheral lengths.  Indeed, if the diagonal entries of the triangular matrix conjugate to $\rho(\gamma)$ are $\lambda_1 \leq \lambda_2\leq \lambda_3$ and those of $\rho^\ast(\gamma)$ are $\lambda_3^{-1} \leq \lambda_2^{-1}\leq \lambda_1^{-1}$, then the midpoint of the segment between them (in the logarithmic Fock-Goncharov coordinates) corresponds to a triangular matrix with diagonal entries $(\lambda_1/\lambda_3)^{1/2} \leq 1 \leq (\lambda_3/\lambda_1)^{1/2}$, that has the same Hilbert length by \eqref{eq:hl}. 
\smallskip

In the case that $\rho$ is cusped, each peripheral holonomy  is unipotent and all the eigenvalues (which are exactly the diagonal elements in the triangular matrix in the conjugacy class) are equal to $1$, and the Hilbert length  $ \ell^H_\rho(\gamma)=0$  for any peripheral curve $\gamma$. The subdomination proved above  implies  that $\ell^H_j(\gamma)=0$; since $j$ is  Fuchsian (discrete and faithful), this  holds if and only if $j(\gamma)$ is parabolic. \end{proof}

 \bibliographystyle{alpha}
 \bibliography{references}
 
\end{document}